\documentclass{article}
\usepackage{amsmath,amssymb,amsthm,mathtools,mathrsfs}
\usepackage{graphicx}
\usepackage[colorlinks=true]{hyperref}

\mathtoolsset{showonlyrefs}
\graphicspath{{fig/}}

\title{Boundary observability for gas giant metrics}
\author{Maarten V. de Hoop\footnote{Rice University, Simons Chair in Computational and Applied Mathematics and Earth Science, TX 77005, USA (\texttt{mvd2@rice.edu})}
\and
Antti Kykk\"anen\footnote{Rice University, Computational applied mathematics and operations research, TX 77005, USA (\texttt{ak272@rice.edu})}
\and
Emmanuel Tr\'elat\footnote{Sorbonne Universit\'e, Universit\'e Paris Cit\'e, CNRS, Inria, Laboratoire Jacques-Louis Lions, LJLL, F-75005 Paris, France (\texttt{emmanuel.trelat@sorbonne-universite.fr}).}}

\date{}

\def\R{\mathrm{I\kern-0.21emR}}
\def\N{\mathrm{I\kern-0.21emN}}
\def\Z{\mathbb{Z}}
\newcommand{\C}{\mathbb{C}}

\newcommand{\energy}{\mathcal{E}}
\newcommand{\abs}[1]{\vert #1 \vert}
\newcommand{\norm}[1]{\Vert #1 \Vert}

\newtheorem{theorem}{Theorem}
\newtheorem{proposition}[theorem]{Proposition}
\newtheorem{corollary}[theorem]{Corollary}

\newtheorem{lemma}[theorem]{Lemma}

\theoremstyle{definition}
\newtheorem{remark}[theorem]{Remark}

\begin{document}
	
\maketitle

\begin{abstract}
We study the observability of waves on gas giant manifolds which are a class of Riemannian manifolds whose metrics are singular at the boundary. Such manifolds arise naturally in modeling of acoustic wave propagation in gas giant planets.

We establish an observability inequality using full boundary measurements given by a Neumann-type trace that is natural in the gas giant setting. The proof proceeds in two steps. First, observability for a general gas giant metric is reduced to the so-called separable case via a perturbation argument. In the separable case, we employ a uniform-in-tangential-frequency analysis combined with an Ingham inequality to prove observability.
\end{abstract}

\section{Introduction}

Consider an~$n+1$ dimensional compact smooth manifold~$X$ with smooth boundary. We equip~$X$ with the singular Riemannian metric
\begin{equation}
\label{eqn:gas-giant-metric}
g
=
\frac{\bar g}{\rho^\alpha}
\end{equation}
where~$\bar g$ is a smooth Riemannian metric on~$X$,~$\rho$ is a boundary defining function and~$\alpha \in (0,2)$ is a parameter. There is a choice of smooth coordinates for~$X$ where the part of~$X$ close to the boundary can be identified with~$[0,1) \times M$ with~$M$ being a closed manifold of dimension~$n$. In these coordinates near~$\partial X$, the Riemannian metric~$g$ can be written in the form
\begin{equation}
g = dx^2 + \frac{g_1(x)}{x^\beta}
\end{equation}
where~$x$ is the coordinate transversal to the boundary,~$\beta > 0$ is a positive parameter and~$g_1(x)$ is a family of smooth Riemannian metrics depending on~$x \in [0,1)$. 
Throughout the paper, we work in such a coordinate system near the singular boundary and identify~$X = [0,1) \times M$.

Singular Riemannian metrics of the form~\eqref{eqn:gas-giant-metric} have been proposed as models for describing wave propagation in gas giant planets (see~\cite{dHIKM2024} for details). Hence such metrics are sometimes referred to as gas giant metrics and the manifold~$(X,g)$ is referred to as a gas giant manifold.

\subsection{Main results}

The purpose of this article is to study observability of acoustic waves on~$X$. In the physical context of gas giant planets the objective is to reconstruct features of the internal structure of the planet based on observations of waves on the surface.
In particular, we are interested in boundary observability, i.e., when wave observations are allowed to be made in a subset~$\omega \subset M$ of the boundary.
Mathematically, observability is modeled by an observability inequality.

It is well-understood and indeed well-documented (see e.g.~\cite{BLR1992}) that validity of an observability inequality is closely tied to the so called geometric control condition, GCC for short: The set~$\omega \subset M$ satisfies GCC if any geodesic ray propagating in~$X$ and reflecting of~$\partial X$ according to the laws of classical optics reaches~$\omega$ in a given finite time. 

In general, GCC fails on gas giants. When $X$ is a closed ball in $\R^{n+1}$, the geometric control condition is never satisfied unless $\omega$ contains the whole boundary. For this reason, our main observability result is stated for the full boundary $\omega = M$. We return to localized observations in Section~\ref{sec:conclusions}. There we first prove a band-limited localized observability estimate and then explain how the abstract moving-observer mechanism developed in the separate paper~\cite{dHKT2026} specializes to the present gas-giant setting, yielding a deterministic Ces\`aro asymptotic observability inequality for time-varying localized observations.

To make matters more concrete, consider the wave equation
\begin{equation}
\label{eqn:wave-equation}
\partial_t^2 u + \triangle_g u = 0
\quad\text{in}\quad (0,T) \times X
\quad\text{with}\quad
u|_{t = 0} = u_0,
\quad
\partial_t u|_{t = 0} = u_1.
\end{equation}
Here~$\triangle_g$ is the Laplace-Beltrami operator of~$(X,g)$. We impose Friedrichs boundary conditions at~$x = 0$ (see Section~\ref{sec:friedrichs} and thereafter) and either Neumann or Dirichlet boundary conditions at~$x = 1$ (the choice of boundary values at $x = 1$ is immaterial, see Remark~\ref{rem:dirichlet-neumann}).

As our main result, we prove an observability inequality in Theorem~\ref{thm:obs-inequality}. The constants in the claim depend on the parameter~$\beta$ and are defined by
\begin{equation}
\label{eqn:geometric-constants}
\nu = \frac{1}{2} + \frac{\beta n}{4}
\qquad\text{and}\qquad
\kappa = \frac{2}{\beta+2}.
\end{equation}

\begin{theorem}
\label{thm:obs-inequality}
Let $u$ solve \eqref{eqn:wave-equation} with the Friedrichs boundary condition at $x = 0$ and either the Dirichlet or the Neumann boundary condition at $x = 1$. Set $\nu$ and $\kappa$ as in \eqref{eqn:geometric-constants}. Then, for every $T > T_\ast$, there exist constants $0 < c_T \leq C_T$ such that
\begin{equation}
\label{main_obs}
\boxed{
 c_T \mathcal{E}_\nu[u_0,u_1]
 \leq  \int_0^T \int_M \Big\vert \lim_{x\to 0^+} x^{\frac{1}{2}-\nu} \partial_x u(x,y,t) \Big\vert^2 dv_G(y) dt
 \leq  C_T \mathcal{E}_\nu[u_0,u_1]
}
\end{equation}
where the anisotropic boundary energy is defined by
\begin{equation}
\label{energies}
\boxed{
 \mathcal{E}_\nu[u_0,u_1]  =  \Vert u_0 \Vert_{H_x^{\nu+\frac{1}{2}} L^2(M)}^2  +  \Vert u_1 \Vert_{H_x^{\nu-\frac{1}{2}} L^2(M)}^2
 +  \Vert \triangle_G^{1/2} u_0 \Vert_{H_x^{\nu-\frac{1}{2}} L^2(M)}^2
}
\end{equation}
Moreover, the threshold $T_\ast = \beta + 2$ is optimal.
\end{theorem}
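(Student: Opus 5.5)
The plan follows the two steps announced in the abstract: first the separable case $g = dx^2 + x^{-\beta} g_1$ with $g_1$ independent of $x$, then a perturbation argument for general $g_1(x)$.

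\emph{Separable case.} I would expand $u$ in eigenfunctions $\phi_k$ of $\triangle_G$ on $M$, with eigenvalues $\lambda_k^2$. In the coordinates $(x,y)$, the Laplacian becomes $-x^{\beta n/2}\partial_x(x^{-\beta n/2}\partial_x) + x^\beta \triangle_G$, up to sign conventions. Conjugating by $x^{\beta n/4}$ turns each mode into a one-dimensional Sturm--Liouville operator
\begin{equation}
L_\lambda = -\partial_x^2 + \frac{\nu^2 - 1/4}{x^2} + \lambda^2 x^{\beta}
\end{equation}
on $(0,1)$, which has a Bessel-type singular endpoint at $x=0$. The Friedrichs extension selects the principal solution, which behaves like $x^{\nu+1/2}$. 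For this solution, the weighted trace $\lim_{x\to0} x^{1/2-\nu}\partial_x$ picks out exactly the leading coefficient.

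For fixed $\lambda$, let the eigenpairs of $L_\lambda$ be $(\mu_{j}(\lambda)^2, \psi_{j,\lambda})$. Then
\begin{equation}
u = \sum_{k,j} \bigl(a_{kj} e^{i\mu_j(\lambda_k) t} + b_{kj} e^{-i\mu_j(\lambda_k)t}\bigr)\psi_{j,\lambda_k}(x)\phi_k(y),
\end{equation}
and the observation splits as a sum over $k$ of one-dimensional integrals $\int_0^T |\sum_j (\dots) e^{\pm i\mu_j t}\, d_j|^2\,dt$, where $d_j$ is the boundary trace of $\psi_{j,\lambda_k}$. The whole estimate then reduces to three ingredients, each of which must be uniform in $\lambda$:
\begin{enumerate}
\item a uniform spectral gap $\mu_{j+1}(\lambda)-\mu_j(\lambda) \ge \gamma > 0$, or at least asymptotically $\ge \pi/T_\ast$ with finitely many exceptions, handled by the Haraux/Beurling refinement of Ingham;
\item two-sided bounds $|d_j|^2 \simeq \mu_j^{2\nu}$, up to the weights defining $\mathcal E_\nu$;
\item norm equivalence, expressing $\mathcal E_\nu$ restricted to the $k$-th mode as a weighted $\ell^2$ sum of $|a_{kj}|^2+|b_{kj}|^2$.
\end{enumerate}

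For the gap I would use WKB/Liouville--Green asymptotics. The modes with $\mu_j$ large compared to $\lambda$ see turning points near $x=0$. The optical length of the interval in the metric $dx^2$ gives $\mu_j \sim \pi j / 1 + \dots$. For the threshold, the relevant quantity is the travel time of rays grazing the singular boundary. A ray leaving the boundary with tangential frequency ratio returns after a time computed from $\int (\mu^2-\lambda^2 x^\beta)^{-1/2}\mu\,dx$. Scaling $x = (\mu/\lambda)^{2/\beta} s$ shows this time is independent of the frequency, and $T_\ast=\beta+2$ should emerge as the supremum of the corresponding return time. Equivalently, the Ingham density constant is $\limsup$ of $\pi/\text{gap}$.

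I expect the main obstacle to be this uniformity of the gap and of the trace constants across the transition regime $\mu_j \approx \lambda$. Deep in the whispering-gallery regime the turning point $x_\lambda = (\mu/\lambda)^{2/\beta}$ is small, and the relevant scale is $\lambda^{-\kappa}$ with $\kappa = 2/(\beta+2)$. There I would rescale $x = \lambda^{-\kappa} s$, which turns $L_\lambda$ into $\lambda^{2\kappa}\bigl(-\partial_s^2 + (\nu^2-1/4)s^{-2} + s^\beta\bigr)$. This is a fixed model operator, so its gaps scale exactly like $\lambda^{\kappa}$, and these grow, which helps. I would then patch the model regime and the WKB regime together using Langer-type uniform asymptotics.

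For optimality of $T_\ast$, I would construct Gaussian beams, or use the explicit modal family, concentrating along a ray whose return time approaches $\beta+2$. This ray stays away from $x=0$ for times $<T_\ast$, so the observation is small while the energy is $1$.

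\emph{General case.} I would write $\triangle_g = \triangle_{g_0} + P$ with $g_0 = dx^2 + x^{-\beta}g_1(0)$. The difference $P$ involves $x^{\beta}(g_1(x)^{-1}-g_1(0)^{-1}) = O(x^{1+\beta})$ times tangential derivatives, plus lower-order terms. Relative to the energy $\mathcal E_\nu$, $P$ is therefore of lower order (compact) near the boundary. By Duhamel, the observation of $u$ differs from that of the separable solution with the same data by a compact operator applied to the data. The upper bound follows directly from this, and the lower bound follows up to a compact term.

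I would remove the compact term by the standard compactness--uniqueness argument. It requires unique continuation for solutions of the general gas-giant wave equation whose weighted Neumann trace vanishes on $(0,T)\times M$. Since the Friedrichs condition fixes the Cauchy data there, this follows from a Holmgren/Tataru-type unique continuation for $T>T_\ast$. Finally, the choice of Dirichlet or Neumann at $x=1$ only changes the one-dimensional spectra by bounded shifts and does not affect the argument.
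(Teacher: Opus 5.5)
\emph{Your proposal has genuine gaps, in both the separable and the non-separable steps.}

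\textbf{Separable case.} Your skeleton is the paper's: tangential modes, Friedrichs selecting the $x^{\nu+\frac12}$ branch, the weighted trace reading off its coefficient, and mode-wise Ingham. You also rightly locate the difficulty in uniformity with respect to $\lambda$. But the uniform ingredients are only asserted, and your own computations contradict the two that carry the statement.

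\emph{Threshold.} The return time you write down is
\[
T(\mu,\lambda)=2\int_0^{\min(1,x_t)}\frac{\mu\,dx}{\sqrt{\mu^2-\lambda^2x^\beta}},\qquad x_t=(\mu/\lambda)^{2/\beta}.
\]
For $x_t\leq1$ this equals $2x_t\int_0^1(1-s^\beta)^{-1/2}ds$, which tends to $0$ for whispering-gallery modes. For $x_t>1$ it lies between $2$ and $2\int_0^1(1-s^\beta)^{-1/2}ds$. Its supremum $2\int_0^1(1-s^\beta)^{-1/2}ds$ comes from rays turning at the wall $x=1$; rays hit $x=0$ with $|\dot x|=1$, so none grazes the singular boundary. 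This supremum equals $\pi$ for $\beta=2$ and $4$ for $\beta=1$. So ``$T_\ast=\beta+2$ should emerge'' is unsupported, and so is the Gaussian-beam optimality argument built on it.

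\emph{Trace weights.} Apply your rescaling $x=\lambda^{-\kappa}s$ to a fixed eigenfunction $\Psi_j\sim A_js^{\nu+\frac12}$ of $-\partial_s^2+(\nu^2-\frac14)s^{-2}+s^\beta$. This gives the approximate normalized mode $\psi=\lambda^{\kappa/2}\Psi_j(\lambda^\kappa\,\cdot)$, whose squared trace grows like $\lambda^{2\kappa(\nu+1)}$. Its energy $\|\psi\|^2_{H^{\nu+1/2}_x}+\lambda^2\|\psi\|^2_{H^{\nu-1/2}_x}$, with $H^s_x$ the Bessel scale (homogeneous of degree $-2$), grows like $\lambda^{\kappa(2\nu+1+\beta)}$ because $2=\kappa(\beta+2)$. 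The ratio $\lambda^{\kappa(1-\beta)}$ rules out your ingredient~2 uniformly in $\lambda$ unless $\beta=1$.

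You cannot fill these steps from the paper's proof. It asserts that $\{\pm\mu_n(\omega)\}$ has Beurling density $1/(\kappa\pi)$ for every $\omega$ (the gap of the model in Section~\ref{sec:1D-case}), and that the energy identification holds uniformly in $\omega$ ``exactly as in Section~\ref{sec:1D-case}''. But on $(0,1)$ one has $\mu_n(0)=j_{\nu,n}\sim\pi n$ (Dirichlet at $x=1$), as you observed yourself. Moreover, a fixed-$\omega$ density statement does not by itself give $\omega$-uniform Ingham constants.

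\textbf{Non-separable case.} $P$ contains the principal tangential part $x^\beta\big(g_1(x)^{-1}-g_1(0)^{-1}\big)^{ij}\partial_{y_i}\partial_{y_j}$. This is small near $x=0$ but of full order. It changes the bicharacteristic flow, and its Duhamel contribution loses derivatives, so ``lower order (compact)'' is false.

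The paper instead splits $K=K_{\mathrm{near}}+K_{\mathrm{far}}$. It handles $K_{\mathrm{near}}$ by its $O(\delta)$ size and a small-perturbation theorem, and $K_{\mathrm{far}}$ (supported in $\{x\geq\delta\}$) by a Duprez--Olive compact-perturbation theorem. The second-order tangential part of $K_{\mathrm{far}}$ is, however, open to the same objection.

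Finally, Holmgren/Tataru unique continuation is not available off the shelf across the singular boundary. A compactness--uniqueness scheme only needs the Fattorini--Hautus test on eigenfunctions, since the invisible subspace is finite-dimensional and $\mathcal{A}$-invariant. That is what the paper checks in the collar, via Frobenius asymptotics and mode-by-mode ODE uniqueness.
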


For a fully polytropic gas giant (see~\cite[Section 1.2]{dHIKM2024}), $\beta = 2$, hence $\kappa = \frac{1}{2}$, $T_\ast = 4$, and $\nu = \frac{1}{2} + \frac{n}{2}$.

\begin{remark}
\label{rem:dirichlet-neumann}
The boundary condition at $x = 1$ does not affect the threshold $T_\ast$ nor the structure of the energy. Indeed, all arguments developed in the proof of Theorem~\ref{thm:obs-inequality} are insensitive to the choice of Dirichlet versus Neumann at $x = 1$: both choices lead to self-adjoint realizations with discrete spectra and the same Weyl constant $\kappa / \pi$.
\end{remark}

In Theorem~\ref{thm:obs-inequality}, $dv_G$ denotes the volume measure of $(M,G = g_1(0))$ and $\triangle_G$ the corresponding Laplace-Beltrami operator. Section~\ref{sec2} contains the definition of the anisotropic spaces appearing in \eqref{energies} as well as the Neumann-like observation trace in \eqref{main_obs}.

A gas giant metric is called separable if $g_1(x) = G$ is independent of $x$. In that case, $\triangle_g$ admits a simple normal form near $x = 0$, which is derived in Section~\ref{sec:normal-form-for-laplacian}. The proof of Theorem~\ref{thm:obs-inequality} first treats the separable case and then deduces the general case by perturbation; both parts are gathered in Section~\ref{sec:boundary-obs-inequality}.

Throughout the article, we use the following notation: for nonnegative quantities $A$ and $B$, we write $A \asymp_T B$ if there exist constants $0 < c_T \leq C_T$ such that $c_T B \leq A \leq C_T B$. Likewise, $A \lesssim_T B$ means $A \leq C_T B$, and $A \gtrsim_T B$ means $A \geq c_T B$.

\subsection{Related results}
Observability and controllability for wave equations has been extensively studied over the past decade. In the standard uniformly hyperbolic setting, observability inequalities are known to hold under certain geometric conditions on the domain, most notably the geometric control condition due to~\cite{BLR1992}. We refer the readers to~\cite{LasieckaTriggiani2000,Lions1988,Miller2005} for detailed expositions on controllability and observability in standard uniformly elliptic settings.

A recent line of research has focused on observability and controllability properties of control systems whose spatial operator degenerates at the boundary of the domain. For wave-type operators, this direction was initiated in~\cite{Gueye2014}, which inspired our analysis in Section~\ref{sec:1D-case}. To the best of our knowledge,~\cite{Gueye2014} remains the only work in this line of research (prior to our contributions) in which the observation or control is applied at the degenerate part of the boundary. Other related contributions  include~\cite{ABCL2017,HB2018,Ouzahra2019,ZG2017}, where degenerate wave equations in non-divergence form are studied in settings closely related to that of Section~\ref{sec:1D-case}. The main difference between our results and the mentioned work is that we make observations at the degenerate part of the boundary as opposed to the non-degenerate part.

Singular Riemannian metrics of the form~\eqref{eqn:gas-giant-metric} have been studied as geometric models for wave propagation on gas giant planets since~\cite{dHIKM2024} where the authors initiated the study of the basic geometric properties of these manifolds and basic analytic properties of the corresponding Laplace-Beltrami operator. The article~\cite{dHIKM2024} also studied certain geometric inverse problems. Extended studies of gas giant geometry have then emerged. In~\cite{CdVDdHT2024}, the authors studied further properties of the Laplace-Beltrami operator and prove Weyl asymptotic laws for the eigenvalues of the operator. A series of works~\cite{Dietze2025,DR2024} studied the concentration of eigenfunctions.

The main contribution of the present work is establishing an observability inequality for the degenerate wave equation on gas giant manifolds.

\subsection{Structure of the article}
In Section~\ref{sec:1D-case}, we study a degenerate $1$-dimensional wave equation and prove the corresponding observability inequality. This model case introduces the one-dimensional spectral analysis and the anisotropic Sobolev scale that reappears in the multidimensional problem.

Section~\ref{sec2} turns to gas giant manifolds. We recall the geometric setting, introduce the conjugation leading to the normal form for $\triangle_g$, analyze the Friedrichs boundary condition at $x = 0$, and define the Neumann-like observation trace used throughout the paper.

The proof of Theorem~\ref{thm:obs-inequality} is split into the separable and the non-separable cases. The separable case is treated by a uniform-in-tangential-frequency argument combined with the one-dimensional analysis. The non-separable case is obtained by a perturbation argument based on Fattorini-Hautus type stability estimates.

Section~\ref{sec:controllability} records the dual exact controllability consequence. Section~\ref{sec:localized-obs-band-limited} explains why fixed localized observations fail in general and derives a positive result under a tangential spectral cutoff. 
Finally, Section~\ref{sec:local-obs-time-varying} records the gas-giant specialization of the abstract moving-observer mechanism developed in the separate paper~\cite{dHKT2026}: exact convexification on finite-dimensional boundary profile spaces, switching realization on each observation interval, and the large-time Ces\`aro tail-reduction argument.

\paragraph{Acknowledgment.}

MVdH was supported by the National Science Foundation under grant DMS-2407456, the Simons Foundation under the MATH + X program and the corporate members of the Geo-Mathematical Imaging Group at Rice University.
ET acknowledges the support of the grant ANR-23-CE40-0010-02 (Einstein-PPF).

\section{A degenerate $1$-dimensional wave equation}
\label{sec:1D-case}

We start by considering a degenerate wave equation with a one spatial dimension. The spatial operator is the same as the leading order degenerate part of the Laplace-Beltrami operator of a gas giant metric. In this section, we establish an observability inequality for this degenerate system (see Theorem~\ref{thm:1d-observability}).

\paragraph{Setting.}

In this section, we study the wave equation
\begin{equation}
\label{eqn:1d-wave-equation}
\partial_t^2u - x^\alpha\partial_x^2u = 0
\quad
\text{in}\quad (0,1) \times (0,T),
\quad
u|_{t=0} = u_0,
\quad
\partial_tu|_{t=0} = u_1.
\end{equation}
with zero Dirichlet boundary conditions at $x = 0$ and $x = 1$.

Some notation is needed to state the main observability result for \eqref{eqn:1d-wave-equation}. We denote by $L^2_\alpha(0,1)$ the Hilbert space of measurable functions $f : (0,1) \to \R$ such that
\begin{equation}
\Vert f \Vert_{L^2_\alpha(0,1)}^2 = \int_0^1 \vert f(x) \vert^2 x^{-\alpha} dx < +\infty.
\end{equation}
The operator $-x^\alpha \partial_x^2$ is self-adjoint and positive on its natural domain in $L^2_\alpha(0,1)$. There exists an orthonormal basis of eigenfunctions $(\Phi_k)_{k \geq 1}$ with eigenvalues $(\lambda_k)_{k \geq 1}$ satisfying
\begin{equation}
-x^\alpha \Phi_k'' = \lambda_k \Phi_k \qquad \text{in } (0,1), 
\qquad \Phi_k(0) = \Phi_k(1) = 0.
\end{equation}
For $s \geq 0$, we define
\begin{equation}
H^s(0,1) = \{  u \in L^2_\alpha(0,1)  \,:  \norm{u}_s < +\infty \}
\end{equation}
where, if $u = \sum_{k \geq 1} a_k \Phi_k$, then
\begin{equation}
\norm{u}_s^2 = \sum_{k \geq 1} \vert a_k \vert^2 \lambda_k^s.
\end{equation}

\begin{theorem}
\label{thm:1d-observability}
Let $\alpha \in (0,2)$ and define $\nu = 1/(2-\alpha)$, $\kappa = 1-\alpha/2$. Let $u$ solve~\eqref{eqn:1d-wave-equation}. Then, for every $T > T^* = 2/\kappa$, there exist constants $0 < c_T \leq C_T$ such that
\begin{equation}
c_T\, \energy_\nu[u_0,u_1] \leq \int_0^T \abs{\partial_xu(0,t)}^2 \,dt \leq C_T\, \energy_\nu[u_0,u_1]
\end{equation}
where the boundary energy is
\begin{equation}
\energy_\nu[u_0,u_1] = \norm{u_0}_{H^{\nu+\frac{1}{2}}}^2 + \norm{u_1}_{H^{\nu-\frac{1}{2}}}^2.
\end{equation}
Moreover, the threshold $T^* = 2/\kappa$ is optimal.
\end{theorem}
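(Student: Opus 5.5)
Our approach is spectral: expand $u$ in the eigenbasis $(\Phi_k)$ and reduce the statement to an Ingham-type inequality for a nonharmonic Fourier series. Writing $u_0=\sum a_k\Phi_k$, $u_1=\sum b_k\Phi_k$ and $\mu_k=\sqrt{\lambda_k}$, the solution is
\begin{equation}
u(x,t)=\sum_{k\ge1}\Big(a_k\cos(\mu_k t)+\frac{b_k}{\mu_k}\sin(\mu_k t)\Big)\Phi_k(x),
\end{equation}
so that
\begin{equation}
\partial_x u(0,t)=\sum_{k} \Phi_k'(0)\Big(a_k\cos(\mu_k t)+\frac{b_k}{\mu_k}\sin(\mu_k t)\Big)=\sum_{k}\big(c_k^+e^{i\mu_k t}+c_k^-e^{-i\mu_k t}\big),
\end{equation}
with $|c_k^\pm|^2\asymp |\Phi_k'(0)|^2(|a_k|^2+|b_k|^2/\lambda_k)$. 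The theorem then follows from two ingredients. The first is a uniform gap $\mu_{k+1}-\mu_k\to \pi\kappa$ (i.e.\ a Weyl law $\mu_k\sim \pi\kappa k$). The second is a weight asymptotic $|\Phi_k'(0)|^2\asymp \lambda_k^{\nu}$. Given both, Ingham's inequality (with Haraux/Beurling's refinement that only the asymptotic gap matters, since the frequencies $\pm\mu_k$ are distinct) yields, for $T>2\pi/(\pi\kappa)=2/\kappa$,
\begin{equation}
\int_0^T|\partial_xu(0,t)|^2dt\asymp_T\sum_k\lambda_k^{\nu}\big(|a_k|^2+\lambda_k^{-1}|b_k|^2\big)=\norm{u_0}_{H^{\nu}}^2+\norm{u_1}_{H^{\nu-1}}^2 .
\end{equation}
We must check this against the normalization of $\energy_\nu$, which uses exponents $\nu\pm\tfrac12$, so the target weight is really $|\Phi_k'(0)|^2\asymp\lambda_k^{\nu+1/2}$. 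We would settle this exponent by the explicit computation below.

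For the spectral asymptotics we use Bessel functions. The substitution $\Phi(x)=\sqrt{x}\,J_{\nu}\big(\tfrac{2\nu}{1}\cdot\sqrt{\lambda}\,x^{1/(2\nu)}\big)$ with $\nu=1/(2-\alpha)$ (the standard solution of $x^\alpha\Phi''+\lambda\Phi=0$, namely $\sqrt{x}\,J_{\nu}(\frac{2}{2-\alpha}\sqrt\lambda\,x^{(2-\alpha)/2})$) gives the solution vanishing at $0$. The Dirichlet condition at $1$ reads $J_\nu(\sqrt{\lambda_k}/\kappa)=0$, so $\mu_k=\kappa j_{\nu,k}$, and McMahon's expansion $j_{\nu,k}=\pi(k+\nu/2-1/4)+O(1/k)$ gives $\mu_{k+1}-\mu_k\to\pi\kappa$. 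Next, the small-argument behaviour $J_\nu(z)\sim (z/2)^\nu/\Gamma(\nu+1)$ gives $\Phi_k(x)\sim C\lambda_k^{\nu/2}x^{1/2+(2-\alpha)\nu/2}=C\lambda_k^{\nu/2}x$, so $\Phi_k'(0)=C\lambda_k^{\nu/2}$ before normalization. The $L^2_\alpha$ norm is computed via the change of variables $z=\sqrt{\lambda_k}x^\kappa/\kappa$ and the classical identity $\int_0^1 rJ_\nu(jr)^2dr=\tfrac12 J_\nu'(j)^2$, with $J_\nu'(j_{\nu,k})^2\sim 2/(\pi j_{\nu,k})$. Tracking the powers of $\lambda_k$ produced by the Jacobian yields $\norm{\Phi_k}^2_{L^2_\alpha}\asymp \lambda_k^{-1/2}$. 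Hence the normalized eigenfunctions satisfy $|\Phi_k'(0)|^2\asymp\lambda_k^{\nu+1/2}$, which matches $\energy_\nu$ exactly.

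The sharp two-sided weight estimate, uniform in $k$, is the main bookkeeping obstacle. We would check it carefully against the exact Bessel normalization rather than rely only on asymptotics; a finite number of low modes is harmless, since every $\Phi_k'(0)\neq0$ by uniqueness for the ODE.

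Optimality of $T^*=2/\kappa$ follows from the necessity part of Ingham-type results. Since $\mu_k=\pi\kappa k+\text{const}+O(1/k)$, the exponentials $\{e^{\pm i\mu_kt}\}$ have upper density $1/(\pi\kappa)$ per unit frequency (counting both signs). By Beurling--Kahane / Landau density, they fail to be a Riesz sequence in $L^2(0,T)$ when $T<2/\kappa$. Concretely, we would construct, for $T<2/\kappa$, finite combinations concentrated at high frequency whose boundary trace is small relative to their energy. The case $T=2/\kappa$ would be handled by the same density argument, or left as the borderline where the lower bound may fail.
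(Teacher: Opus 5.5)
Your proposal is correct and follows the paper's proof essentially step by step. The shared steps are the explicit eigenfunctions $x^{1/2}J_\nu(j_{\nu,k}x^\kappa)$, the small-argument Bessel expansion giving $\Phi_k'(0)\asymp j_{\nu,k}^{\nu}/|J_\nu'(j_{\nu,k})|$, and $J_\nu'(j_{\nu,k})^2\asymp 1/j_{\nu,k}$, which produces the weight $\lambda_k^{\nu+\frac12}$; then Ingham's inequality with gap $\kappa\pi$, and Beurling upper density for optimality.

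The only differences are cosmetic. You invoke the asymptotic-gap (Haraux/Beurling) version of Ingham. The paper instead gets a genuine uniform gap $\geq\kappa\pi$ from the monotone decrease of $j_{\nu,k+1}-j_{\nu,k}$ to $\pi$ when $\nu>\frac12$. Also, the provisional weight $\lambda_k^{\nu}$ in your first display should be replaced by the $\lambda_k^{\nu+\frac12}$ you compute afterwards.
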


The proof of the theorem is based on an explicit computation of the eigenfunctions and uses several standard properties of Bessel functions.
The explicit computation of $\Phi_k$ is carried out in the next section.

\subsection{Proof of Theorem~\ref{thm:1d-observability}}

We start by explicitly computing the eigenvalues and eigenfunctions of the operator $-x^\alpha\partial_x^2$. Hence we will solve the equation
\begin{equation}
\label{eqn:alpha-bessel}
-x^\alpha y''(x) = \lambda y(x)
\quad
\text{in}
\quad
(0,1)
\quad
\text{with}
\quad
y(0) = y(1) = 0.
\end{equation}
Let us set
\begin{equation*}
\nu = \frac{1}{2-\alpha}
\qquad \text{and} \qquad
\kappa = 1 - \frac{\alpha}{2}.
\end{equation*}

\begin{proposition}
\label{prop:bessel-functions}
Let $\alpha \in (0,2)$. The eigenvalues in~\eqref{eqn:alpha-bessel} are $\lambda_k = (\kappa j_{\nu,k})^2 $ for $k \in \N$ and the corresponding solutions to~\eqref{eqn:alpha-bessel} are $y_k(x) = x^{\frac12} J_{\nu}(j_{\nu,k}x^\kappa)$ where $J_\nu$ is the $\nu$-th Bessel function of the first kind and $j_{\nu,k}$ is the $k$-th zero of $J_{\nu}$. The $L^2_\alpha(0,1)$ norm of $y_k$ is equal to $(2\kappa)^{-1/2}\abs{J'_\nu(j_{\nu,k})}$.
 \end{proposition}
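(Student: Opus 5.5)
The plan is to reduce \eqref{eqn:alpha-bessel} to Bessel's equation by the substitution $y(x) = x^{1/2} w(z)$ with $z = c\,x^\kappa$, where $c>0$ is chosen later. With $\kappa = 1-\alpha/2$, we have $\kappa - 1 = -\alpha/2$, so $x^{-\alpha} = x^{2\kappa-2}$. This is what makes the Bessel reduction close up.

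Writing $w' = dw/dz$, a direct computation gives
\begin{equation*}
y' = \tfrac12 x^{-1/2} w + \kappa c\, x^{\kappa - 1/2} w',
\end{equation*}
\begin{equation*}
y'' = -\tfrac14 x^{-3/2} w + \kappa^2 c\, x^{\kappa-3/2} w' + \kappa^2 c^2 x^{2\kappa - 3/2} w''.
\end{equation*}
Multiplying $y'' + \lambda x^{-\alpha} y = 0$ by $x^{3/2}/\kappa^2$ and using $c\,x^\kappa = z$ yields
\begin{equation*}
z^2 w'' + z w' + \Big(\tfrac{\lambda}{\kappa^2 c^2} z^2 - \tfrac{1}{4\kappa^2}\Big) w = 0 .
\end{equation*}
Since $\frac{1}{2\kappa} = \frac{1}{2-\alpha} = \nu$, choosing $c = \sqrt\lambda/\kappa$ gives exactly Bessel's equation of order $\nu$.

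The general solution is therefore $y = x^{1/2}\big(A J_\nu(cx^\kappa) + B Y_\nu(cx^\kappa)\big)$, since $\nu \notin \Z$ is not guaranteed. Near $0$, $x^{1/2}J_\nu(cx^\kappa) \sim C x^{1/2+\kappa\nu} = Cx$, which vanishes. The second solution behaves like $x^{1/2-\kappa\nu} = x^0$, a nonzero constant; this holds both for $J_{-\nu}$ and for $Y_\nu$, including the case of integer $\nu$. The condition $y(0)=0$ therefore forces $B=0$. The condition $y(1)=0$ gives $J_\nu(c)=0$, so $c = j_{\nu,k}$ and $\lambda_k = \kappa^2 j_{\nu,k}^2$. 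Positivity of the operator excludes $\lambda \le 0$, or one can check directly that for $\lambda\le0$ the only solution with $y(0)=y(1)=0$ is trivial, since $y$ is then convex or concave in the appropriate sense. Self-adjointness and the simplicity of Sturm–Liouville eigenvalues then give the full list.

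For the norm, substitute $z = x^\kappa$. Then $dz = \kappa x^{\kappa-1}dx$, and
\begin{equation*}
x^{1-\alpha}\,dx = x^{2\kappa-1}\,dx = \tfrac1\kappa z\,dz .
\end{equation*}
Hence
\begin{equation*}
\int_0^1 x\,J_\nu(j_{\nu,k}x^\kappa)^2 x^{-\alpha}\,dx = \frac1\kappa\int_0^1 z\,J_\nu(j_{\nu,k}z)^2\,dz = \frac{1}{2\kappa} J_\nu'(j_{\nu,k})^2,
\end{equation*}
by the standard Lommel integral $\int_0^1 zJ_\nu(jz)^2dz = \tfrac12 J_\nu'(j)^2$ at a zero $j$ of $J_\nu$. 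Taking square roots gives the claimed norm $(2\kappa)^{-1/2}|J_\nu'(j_{\nu,k})|$.

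The main obstacle is the boundary analysis at $x=0$: we must justify that the Dirichlet condition, interpreted in the operator domain in $L^2_\alpha$, excludes the second solution. The second solution tends to a nonzero constant, so the pointwise condition suffices. We would also note that it does lie in $L^2_\alpha$ precisely because $\alpha<1$ is not assumed, so the exclusion comes from $y(0)=0$ rather than from integrability. The Bessel reduction and the Lommel integral are routine.
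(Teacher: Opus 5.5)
Your proposal is correct and follows essentially the paper's route. You reduce to Bessel's equation of order $\nu=1/(2\kappa)$, deriving the substitution by hand where the paper quotes the general formula for $x^2y''+axy'+(bx^l+c)y=0$. You then use $y(0)=0$ to discard the second solution and $y(1)=0$ to obtain the zeros $j_{\nu,k}$. Finally you compute the norm via $z=x^\kappa$ and the Lommel integral, which is the paper's orthogonality relation combined with the recurrence $J_{\nu+1}(j_{\nu,k})=-J_\nu'(j_{\nu,k})$.

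The one slip is in your closing remark. The constant-like second solution belongs to $L^2_\alpha(0,1)$ exactly when $\alpha<1$, so integrability rules it out only for $\alpha\ge 1$. That is why the pointwise condition $y(0)=0$ is what excludes it uniformly for $\alpha\in(0,2)$.
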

 
 \begin{proof}
 Equation~\eqref{eqn:alpha-bessel} is a special case of the more general Bessel equation
 \begin{equation}
 \label{eqn:bessel-equation}
 x^2y''(x) + axy'(x) + (bx^l + c)y(x) = 0.
 \end{equation}
 When $b \neq 0$, the solutions to~\eqref{eqn:bessel-equation} are of the form $y(x)  =  x^{\frac12(1-a)} Z_\nu(\kappa^{-1}\sqrt{b}x^\kappa)$ where $Z_\nu$ is any Bessel function and the constant $\nu$ and $\kappa$ are determined by the coefficients as
$$
\nu = \frac{1}{l}\sqrt{(1-a)^2 - 4c}  \quad  \text{and}  \quad  \kappa = \frac{l}{2}.
$$ 
For the special case of interest, equation~\eqref{eqn:alpha-bessel} the coefficients are~$a = 0$,~$b = \lambda$,~$c = 0$ and~$l = 2-\alpha$. Hence $\nu = \frac{1}{2-\alpha}$ and $\kappa = 1 - \frac{\alpha}{2}$ as defined before the proposition.

Introducing the boundary values as in~\eqref{eqn:alpha-bessel} forces us to take~$Z_\nu $ to be the Bessel function of the first kind,~$J_\nu$, since we must have~$y(0) = 0$. Then the other boundary condition provides us that~$J_\nu(\kappa^{-1}\sqrt{\lambda}) = 0$.  The Bessel function has countably many zeros~$j_{\nu,k}$ for $k \in \N$ and we must have ~$j_{\nu,k} = \kappa^{-1}\sqrt{\lambda}$. Hence there are countably many possible choices for~$\lambda$ which we will denote by~$\lambda_k = (\kappa j_{\nu,k})^2$. These are the eigenvalues as claimed. We have also shown that the corresponding solutions to~\eqref{eqn:alpha-bessel} are
$y_k(x) = x^{\frac12}J_\nu(j_{\nu,k}x^\kappa)$.

Next, we will compute the norm of $y_k$. To accomplish this, we use an orthogonality property of the Bessel functions: For any $n,m\in\N$ it holds that
$$
\int_0^1 xJ_{\nu}(j_{\nu,n}x)J_{\nu}(j_{\nu,m}x) \,dx = \frac{\delta_{nm}}{2} (J_{\nu+1}(j_{\nu,n}))^2.
$$
Thus, changing variables by $s = x^{1-\frac{\alpha}{2}}$ gives
$$
\int_0^1 y_n(x)y_m(x) \,x^{-\alpha}dx = \kappa^{-1} \int_0^1 sJ_{\nu}(j_{\nu}s)J_{\nu}(j_{\nu,m}s) \,ds = \kappa^{-1} \frac{\delta_{nm}}{2} (J_{\nu+1}(j_{\nu,n}))^2. 
$$
In particular, the norm of $y_k$ is
\begin{equation}
\norm{y_k}_0 = (2\kappa)^{-\frac{1}{2}} \abs{J_{\nu+1}(j_{\nu,k})} = (2\kappa)^{-\frac{1}{2}} \abs{J_{\nu}'(j_{\nu,k})}.
\end{equation}
The last step above is valid due to the recurrence rule of the Bessel functions:
\begin{equation}
\label{eqn:bessel-recurrence}
xJ_\nu'(x) - \nu J_\nu(x) = -xJ_{\nu+1}(x).
\end{equation}
This completes the proof.
 \end{proof}
 
Proposition~\ref{prop:bessel-functions} shows that the eigenfunction basis of the operator $-x^\alpha\partial_x^2$ is formed by the functions $ \Phi_k(x) = (2\kappa)^{\frac{1}{2}}\abs{J'_{\nu}(j_{\nu,k})}^{-1}y_k(x)$.

 \begin{proof}[Proof of Theorem~\ref{thm:1d-observability}]
We represent the initial data in equation~\eqref{eqn:1d-wave-equation} using the eigenfunction from Proposition~\ref{prop:bessel-functions} as
$$
u_0(x) = \sum_{k \geq 0} u_0^k\Phi_k(x) \qquad \text{and} \qquad u_1(x) = \sum_{k \geq 0} u_1^k\Phi_k(x).
$$
Let $\omega_k = \sqrt{\lambda_k} = \kappa j_{\nu,k}$ for any $k \in \N$. Then expanding the solution $u$ in a Fourier series in $t$ and using Proposition~\ref{prop:bessel-functions} we find that
$$
u(x,t) = \sum_{k \in \N} u_k(t)\Phi_k(x) \qquad \text{where} \qquad u_k(t) = a_k e^{i\omega_kt} + a_{-k}e^{-i\omega_kt}.
$$
The coefficients $a_{\pm k}$ are determined by the initial values; we must have $a_k + a_{-k} = u_0^k$ and $i\omega_k(a_k - a_{-k}) = u_1^k$, or equivalently,
$$
a_k = \frac12 \left( u_0^k - i\frac{u_1^k}{\omega_k} \right) \qquad \text{and} \qquad a_{-k} = \frac12 \left( u_0^k
+ i\frac{u_1^k}{\omega_k} \right).
$$
The next step is to compute the Neumann trace of $u$ as $x\to 0$. We let $C_{\nu,k} = (2\kappa)^{\frac{1}{2}}\abs{J'_{\nu}(j_{\nu,k})}^{-1}$. Then differentiating $\Phi_k$ gives
\begin{equation}
\label{eqn:bessel-derivative}
\Phi_{k}'(x) = C_{\nu,k} \left( \frac{1}{2x^{\frac12}} J_\nu(j_{\nu,k}x^\kappa) + j_{\nu,k}x^{\kappa - \frac{1}{2}} J_\nu'(j_{\nu,k}x^\kappa) \right) 
\end{equation}
We use the recurrence property~\eqref{eqn:bessel-recurrence} of Bessel functions to write
$$
j_{\nu,k}x^{\kappa - \frac{1}{2}} J_\nu'(j_{\nu,k}x^\kappa) = \kappa\nu x^{-\frac{1}{2}}J_\nu(j_{\nu,k}x^\kappa) - \kappa j_{\nu,k}x^{\kappa - \frac12} J_{\nu +1}(j_{\nu,k}x^\kappa).
$$
Given that $\kappa\nu=\frac12$ the derivative in~\eqref{eqn:bessel-derivative} becomes
$$
\Phi'_{k}(x) = C_{\nu,k} \left( \frac{1}{2x^{\frac12}} J_\nu(j_{\nu,k}x^\kappa) - j_{\nu,k}x^{\kappa - \frac{1}{2}} J_{\nu+1}(j_{\nu,k}x^\kappa) \right)
$$
Then we use the well-known series expansions for the Bessel functions $J_\nu$ abd $J_{\nu+1}$ to write
$$
J_\nu(j_{\nu,k}x^\kappa) = \sum_{n \geq 0} \frac{(-1)^n}{n!\Gamma(n+\nu+1)} \left( \frac{j_{\nu,k}x^\kappa}{2} \right)^{2n+\nu} = \frac{(j_{\nu,k})^\nu}{2^\nu\Gamma(\nu+1)}x^{\frac{1}{2}} + \mathrm{O}(x^{\frac{1}{2}+2\kappa}) 
$$
and
$$
J_{\nu+1}(j_{\nu,k}x^\kappa) = \sum_{n \geq 0} \frac{(-1)^n}{n!\Gamma(n+\nu+2)} \left( \frac{j_{\nu,k}x^\kappa}{2} \right)^{2n+\nu+1} = \frac{(j_{\nu,k})^{\nu+1}}{2^{\nu+1}\Gamma(\nu+2)}x^{\frac{1}{2}} + \mathrm{O}(x^{\frac{1}{2}+3\kappa}).
$$
Remembering that $C_{\nu,k} = (2\kappa)^{\frac12}\abs{J'_\nu(j_{\nu,k})}^{-1}$ the above gives
$$
\Phi'_{k}(x) = C_{\nu,k} \frac{(j_{\nu,k})^\nu}{2^\nu\Gamma(\nu+1)} + \mathrm{O}(x^{2\kappa}) = \tilde C_{\nu}
\frac{(j_{\nu,k})^\nu}{\abs{J'_{\nu}(j_{\nu,k})}} + \mathrm{O}(x^{2\kappa}) 
$$
for a new constant $\tilde C_\nu$ independent of $k$.

Inserting the asymptotics of the derivative $\Phi'_k(x)$ into the expansion of the solution $u$ gives
$$
\lim_{x\to 0^+} \partial_xu(x,t) \asymp_\alpha \sum_{k \geq 0} \frac{(j_{\nu,k})^\kappa}{\abs{J'_\nu(j_{\nu,k})}}
(b_ne^{i\omega_kt}+b_{-n}e^{-i\omega_kt}) = \sum_{k \in \Z} a_ke^{i\tilde\omega_kt}
$$
where $\tilde\omega_n = \omega_n$, $\tilde\omega_{-n} = -\omega_{n}$ and $a_n = (j_{\nu,n})^\nu\abs{J'_{\nu}(j_{\nu,n})}^{-1}b_n$, $a_{-n} = (j_{\nu,n})^\nu\abs{J'_{\nu}(j_{\nu,n})}^{-1}b_{-n}$ for $n \in \N$.
The sequence $(\tilde\omega_n)$ satisfies the gap condition since the sequence $(j_{\nu,n+1}-j_{\nu,n})_n$ is strictly decreasing due to $\abs{\nu} > 1/2$ which uses the fact that $\alpha \in (0,2)$.

The sequence $(\tilde\omega_n)$ satisfies the gap condition since the sequence $(j_{\nu,n+1}-j_{\nu,n})_n$ is strictly decreasing due to $\abs{\nu} > 1/2$ which uses the fact that $\alpha \in (0,2)$. Hence we can use Ingham inequality (see Remark~\ref{rem:ingham} after the proof) to deduce that, when $T > 2\pi/\gamma$ with $\gamma \coloneqq \kappa\pi$, we have
$$
\int_0^T \abs{\partial_xu(x,t)}^2|_{x=0}\,dt = \int_0^T 	\abs{\sum_{k \in \Z}a_ke^{i\tilde\omega_kt}}^2 \,dt
\asymp_\alpha \sum_{k \in \Z} \abs{a_k}^2 
\asymp \sum_{k \in \N} \frac{(j_{\nu,n})^{2\nu}}{[J'_\nu(j_{\nu,n})^2]} \left( \abs{u^n_0}^2 + \frac{\abs{u^n_1}^2}{\abs{\omega_n}^2} \right)
$$
To manipulate this further we use the asymptotics
$$
J'_{\nu}(j_{\nu,k}) \asymp \sqrt{\frac{2}{\pi j_{\nu,k}}} \asymp \frac{1}{\sqrt{j_{\nu,k}}}.
$$
We also make use of the fact that $\omega_n = \sqrt{\lambda_n}$ and introduce $\kappa$. By doing so we obtain
$$
\int_0^T (\partial_xu(x,t))^2|_{x=0} \,dt \asymp_\alpha \sum_{k \in \N} (\kappa j_{\nu,n})^{2\nu+1} \left( \abs{u^n_0}^2 + \frac{\abs{u^n_1}^2}{\lambda_n} \right) 
= \sum_{k \in \N} \left( \lambda_n^{\nu+\frac{1}{2}} \abs{u^n_0}^2 + \lambda_n^{\nu-\frac{1}{2}} \abs{u^n_1}^2 \right).
$$
By the definition of the spaces $H^s_\alpha$ the last quantity is comparable to
$\norm{u_0}^2_{H^{\nu+1/2}_\alpha} + \norm{u_1}_{H^{\nu-1/2}_\alpha}^2$,
which proves the observation inequality.

The optimality of the lower bound on $T$ follows from Remark~\ref{rem:upper-density}.
\end{proof}

\begin{remark}[Beurling upper density]
\label{rem:upper-density}
Let $\Omega=\{\omega_k\}_{k\in\mathbb{Z}}\subset\R$ be a discrete multiset of real frequencies with no finite accumulation points, and let $N(x)=\#\{\omega\in\Omega:\ \omega\leq x\}$ be its counting function. The \emph{Beurling upper density} is
$$
D^+(\Omega)=\limsup_{R\to\infty}\ \sup_{x\in\R}\ \frac{N(x+R)-N(x)}{R}.
$$
It measures the maximal average number of frequencies per unit length in arbitrarily long windows, is invariant under finite perturbations of $\Omega$, and (for uniformly separated sequences) coincides with the reciprocal of the asymptotic gap: if $g=\lim_{|k|\to\infty}(\omega_{k+1}-\omega_k)$ exists, then $D^+=1/g$. In our setting $\omega_{\pm n}=\pm\mu_n$ with $\mu_n\sim\kappa\pi n$, hence $g=\kappa\pi$ and $D^+=1/(\kappa\pi)$. With this normalization, the Ingham–Beurling theorem yields the observability/frame inequality on $[0,T]$ as soon as $T>2\pi D^+$, i.e., $T>2/\kappa$.
\end{remark}

\begin{remark}[Ingham inequality]
\label{rem:ingham}
In full generality, let $\Omega=\{\omega_k\}_{k\in\mathbb{Z}}\subset\R$ be a discrete set of real frequencies with a uniform gap
$\inf_{k\neq m}|\omega_k-\omega_m|\geq\gamma>0$.
Then, by~\cite{Ingham1936}, for every $T>0$ sufficiently large (equivalently, as soon as $T>2\pi D^+(\Omega)$ when one formulates the threshold via Beurling’s upper density) there exist constants $c_T,C_T>0$ such that, for every finitely supported sequence $(a_k)$,
$$
c_T \sum_{k\in\mathbb{Z}} |a_k|^2 \leq \int_0^{T}\Big|\sum_{k\in\mathbb{Z}} a_k e^{i\omega_k t}\Big|^2 dt \leq C_T \sum_{k\in\mathbb{Z}} |a_k|^2.
$$
What we use here is the standard uniform-gap Ingham inequality.
We simply restate the threshold as $T>2\pi D^+$ because in our setting $\omega_{\pm n}=\pm\mu_n$ with $\mu_n\sim \kappa\pi n$, hence the asymptotic gap is $g=\kappa\pi$ and $D^+=1/g=1/(\kappa\pi)$, yielding the same sharp condition $T>2/\kappa$.
We refer the reader to~\cite{BKL2002,Haraux1989,KL2005} for more details.
\end{remark}

\section{Boundary observability for gas giants}\label{sec2}

\paragraph{Setting.}
Let $X=(0,1)\times M$ where $(M,g_1)$ is a smooth compact Riemannian manifold of dimension $n\geq 1$, with the metric
\begin{equation}\label{eq:metric}
	g = dx^2 + x^{-\beta}\,g_1(x), \qquad \beta>0, \qquad g_1\in C^{\infty}\big([0,1);{\rm Met}(M)\big) ,
\end{equation}
where $g_1(x)$ is a smooth $x$-dependent Riemannian metric on the boundary manifold $M$.
Let $dv_G$ denote the Riemannian volume on $M$ and let $\widetilde\triangle_g$ be the Laplace-Beltrami operator on $(X,g)$ with Friedrichs boundary condition at $x=0$ (see Section \ref{sec:friedrichs} further) and either Dirichlet or Neumann condition at $x=1$; we adopt the ``geometric" convention and assume that $\widetilde\triangle_g$ is nonnegative.
In the local coordinates, we have
$$
dv_g = x^{-\frac{\beta n}{2}} \left( \det g_1(x,y) \right)^{1/2}\, dx\, dy = x^{-\frac{\beta n}{2}} dx\, dv_{g_1(x)} 
$$
with
$$
dv_{g_1(x)} = \left( \frac{\det g_1(x,y)}{\det g_1(0,y)} \right)^{\frac{1}{2}} \underset{x\rightarrow 0^+}{\longrightarrow} dv_G
$$
with $G=g_1(0)$. In the \emph{separable} case, we have $g_1(x)=G$.

We investigate the wave equation
$$
\boxed{
\partial_t^2z+\widetilde\triangle_gz=0
}
$$
and seek an observability inequality with the natural Neumann boundary observation at $x=0$, which is the physical flux (coming from the Stokes formula)
\begin{equation}\label{physical_flux}
	\boxed{
		\mathcal{C}_{\textrm{phys}}(z) = \lim_{x\rightarrow 0^+} x^{-\frac{\beta n}{2}}\!\left(\frac{\det g_1(x,y)}{\det g_1(0,y)}\right)^{\frac{1}{2}}\partial_x z = \lim_{x\rightarrow 0^+} x^{-\frac{\beta n}{2}} \partial_x z  \quad\textrm{in}\ L^2(M,dv_G)
	}
\end{equation}

\paragraph{Conjugation.}
We define the unitary operator 
$$
U:L^2(X,dv_g)\rightarrow L^2((0,1)\times M,dx\, dv_G(y))
$$
by 
\begin{equation}\label{eq:unitary-U}
u(x,y) = (Uz)(x,y) = m(x,y)z(x,y),
\end{equation}
with
$$
m(x,y) = x^{-\frac{\beta n}{4}}\rho(x,y)
\qquad\textrm{and}\qquad
\rho(x,y)=\left(\frac{\det g_1(x,y)}{\det g_1(0,y)}\right)^{\frac{1}{4}}
$$
(note that $\rho$ is smooth and $\rho(0,y)=1$), and we define on $L^2((0,1)\times M,dx\, dv_G(y))$ the nonnegative operator
$$
\triangle_g = U \widetilde\triangle_g U^{-1} .
$$
Note that, in~\cite{CdVDdHT2024}, the authors used the conjugation by the density $x^{-\frac{\beta n}{4}}$, but since it is not exactly unitary (this changes some multiplicative constants though, which is harmless). We prefer to use the one above.
Note anyway that, near $x=0$, we have $u \sim x^{-\frac{\beta n}{4}}z$.

From now on, we study the wave equation
\begin{equation}\label{eq:wave}
	\partial_t^2 u + \triangle_g u = 0 \quad \text{on }(0,1)\times M,\qquad
	u|_{t=0}=u_0,\ \ \partial_t u|_{t=0}=u_1,
\end{equation}
with Friedrichs boundary condition at $x=0$ (see Section \ref{sec:friedrichs} hereafter) and either Dirichlet or Neumann condition at $x=1$.
In order to derive the right Neumann boundary observation at $x=0$, corresponding to \eqref{physical_flux} in the new coordinate $u$, we need to analyze in more detail the Friedrichs condition at $x=0$. But we first establish a normal form for the Laplacian, in the next section.

\subsection{Normal form for the Laplacian}
\label{sec:normal-form-for-laplacian}
This section is dedicated to establishing a normal form for $\triangle_g$ near $x=0$.

\begin{lemma}\label{lem_normal_form}
There exist $\delta\in(0,1)$ and $C>0$ such that, on $(0,\delta]\times M$,
\begin{equation}\label{eq:L-normal-form}
\triangle_g = -\partial_x ^2 +\frac{C_{\beta}}{x^2} + x^{\beta}\triangle_G + x^\beta\big(\triangle_{G(x)}-\triangle_G\big) \mathcal{T}(x,y,D_y) + \mathcal{R}(x,y,D_x ,D_y) 
\end{equation}
where
\begin{itemize}
\item $\triangle_G=-\mathrm{div}_G\nabla_y\geq 0$ is the Laplace-Beltrami operator on $(M,G=g_1(0))$;
\item $C_{\beta}=\frac{\beta n}{4}\left(\frac{\beta n}{4}+1\right)=\nu^2-\frac{1}{4}$ with $\nu=\frac{1}{2}+\frac{\beta n}{4}$;
\item $x^\beta\big(\triangle_{G(x)}-\triangle_G\big)$ is a \emph{purely tangential} operator of order $2$ with coefficients in $C^1$ satisfying $\big\| \triangle_{G(x)}-\triangle_G\big\|_{C^1(M)}\leq Cx$;
\item $\mathcal{T}(x,y,D_y)$ is a \emph{tangential} differential operator of order $1$ (it involves no $\partial_x $), satisfying $\|\mathcal{T}(x,\cdot,\cdot)\|_{C^1(M)}\leq Cx$;
\item $\mathcal{R}(x,y,D_x ,D_y)$ is of total order $\leq 2$, contains no purely tangential second-order part
(those are kept in $x^\beta(\triangle_{G(x)}-\triangle_G)$), and collects:
a zero-order potential, mixed terms $\partial_xD_y$, and a possible first-order term in $\partial_x$,
all with $C^1$ coefficients bounded by $Cx$.
\end{itemize}
In particular, $\mathcal{T}(0,\cdot,\cdot)=\mathcal{R}(0,\cdot,\cdot)=0$.\\
In the separable case where $g_1(x)\equiv G$, we have $\mathcal{T}=0$ and $\mathcal{R}=0$, and thus
\begin{equation}
\triangle_g = \triangle_g^{\textrm{sep}} = -\partial_x ^2 +\frac{C_{\beta}}{x^2} + x^{\beta}\triangle_G.
\end{equation}
\end{lemma}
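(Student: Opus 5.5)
The plan is a direct computation: write out $\widetilde\triangle_g$ in the coordinates $(x,y)$, conjugate by the multiplier $m = x^{-\beta n/4}\rho$ from \eqref{eq:unitary-U}, and sort the resulting terms by type.

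\emph{Step 1: the unconjugated operator.} Write $h(x) = g_1(x)$ and $\mu(x,y) = x^{-\beta n/2}(\det h)^{1/2}$. Since $g = dx^2 + x^{-\beta}h(x)$, the inverse metric is $\partial_x\otimes\partial_x + x^{\beta}h(x)^{-1}$, and we get
\begin{equation}
\widetilde\triangle_g z = -\mu^{-1}\partial_x(\mu\,\partial_x z) + x^{\beta}\triangle_{h(x)} z .
\end{equation}
No mixed terms $\partial_x\partial_y$ appear, because $g$ is block diagonal. Here $\triangle_{h(x)}$ is the Laplacian of $(M,h(x))$ for fixed $x$; this is what the statement calls $\triangle_{G(x)}$. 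Writing $\triangle_{h(x)} = \triangle_G + (\triangle_{h(x)}-\triangle_G)$, smoothness of $h$ in $x$ and $h(0)=G$ give the $C^1$ coefficient bound $Cx$ for the tangential second-order difference.

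\emph{Step 2: conjugation in the separable case.} Take $h\equiv G$, so $\rho\equiv 1$ and $m = x^{-a}$ with $a = \beta n/4$. The radial part is $-\partial_x^2 - \mu^{-1}\mu'\partial_x$ with $\mu'/\mu = -2a/x$. Conjugating $-\partial_x^2 + \tfrac{2a}{x}\partial_x$ by $x^{-a}$ is the standard Liouville transformation. The first-order terms cancel, and the potential is
\begin{equation}
\frac{a(a+1)}{x^2} = \frac{C_\beta}{x^2}, \qquad C_\beta = \Big(a+\tfrac12\Big)^2 - \tfrac14 = \nu^2 - \tfrac14 .
\end{equation}
The tangential part $x^\beta\triangle_G$ commutes with multiplication by $x^{-a}$. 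This gives $\triangle_g^{\mathrm{sep}}$, with $\mathcal{T} = \mathcal{R} = 0$.

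\emph{Step 3: the general case.} Now $m = x^{-a}\rho$, where $\rho$ is smooth, positive and satisfies $\rho(0,\cdot) = 1$, so that $\rho - 1 = O(x)$ and $\partial_y\rho = O(x)$.
\begin{itemize}
\item \textbf{Radial part.} We have $\mu = x^{-2a}\rho^2(\det G)^{1/2}$. The same Liouville computation, with $\rho$ included, is exact: conjugating $-\mu^{-1}\partial_x\mu\partial_x$ by $m = \mu^{1/2}(\det G)^{-1/4}$ gives
\begin{equation}
-\partial_x^2 + \frac{(\mu^{1/2})''}{\mu^{1/2}} = -\partial_x^2 + \frac{C_\beta}{x^2} + \frac{2(-a)\rho'}{x\rho} + \frac{\rho''}{\rho},
\end{equation}
where $'$ denotes $\partial_x$. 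The two extra terms are smooth zero-order potentials; they go into $\mathcal{R}$.
\item \textbf{Tangential part.} Consider $m\,x^\beta\triangle_{h(x)}\,m^{-1}$. The factor $x^{-a}$ commutes with it, so this equals $x^\beta\rho\,\triangle_{h(x)}\rho^{-1}$. Expanding gives $x^\beta\triangle_{h(x)}$ plus a first-order tangential operator with coefficients built from $\nabla_y\rho$, plus a zero-order term built from $\triangle_{h}\rho^{-1}$. All of these are $O(x)$ in $C^1$, since $\rho(0,\cdot)\equiv 1$ makes its $y$-derivatives vanish at $x=0$.
\end{itemize}
Collect the first-order tangential terms, after factoring out $x^\beta$, into $\mathcal{T}$. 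I read the statement's $x^\beta(\triangle_{G(x)}-\triangle_G)\mathcal{T}$ loosely as ``tangential second-order difference plus a tangential first-order remainder $\mathcal{T}$''. Put the zero-order pieces into $\mathcal{R}$. Restricting to $(0,\delta]$ keeps everything uniformly bounded.

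The main obstacle is bookkeeping rather than analysis. The key point is that $\rho(0,y)=1$ forces every extra term to vanish at $x=0$ at rate $O(x)$. One must check this in $C^1$ norm, including the $y$-derivatives of $\log\det h$, and confirm that no $x^{-1}$ singularity survives beyond $C_\beta/x^2$. The only candidate is the cross term $-2a\rho'/(x\rho)$, which is in fact $O(1)$. That is acceptable for $\mathcal{R}$ as a bounded potential, though not with the stated $Cx$ bound unless $\partial_x\rho(0,\cdot) = 0$. If needed, I would refine the choice of $\rho$, or accept an $O(1)$ bound for this bounded potential, which is harmless in the later perturbation argument. The absence of $\partial_x D_y$ terms is automatic here, but listing them in $\mathcal{R}$ does no harm.
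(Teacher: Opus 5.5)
Your route is essentially the paper's: write $\widetilde\triangle_g$ in coordinates, conjugate by $m$, and sort the terms. Your exact formula $-\partial_x^2+(\mu^{1/2})''/\mu^{1/2}$ is the paper's $V_x=\frac12\partial_x b+\frac14 b^2$ with $b=\partial_x\log\mu$. You are also right that block-diagonality, plus conjugation by a multiplication operator, produces no mixed $\partial_x D_y$ terms and no tangential drift beyond $\triangle_{h(x)}$.

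\textbf{The gap is your final order count.} The claim that the cross term $-2a\,\partial_x\rho/(x\rho)$ is $O(1)$ is false. The condition $\rho(0,\cdot)=1$ controls $(\rho-1)/x$ and $\nabla_y\rho$, but it says nothing about $\partial_x\rho$, and $\partial_x\rho(0,y)=\frac14\partial_x\log\det g_1(0,y)$ is in general nonzero. Writing $\rho=1+r_1x+r_2x^2+O(x^3)$, your own formula gives
\[
\frac{(\mu^{1/2})''}{\mu^{1/2}}=\frac{C_\beta}{x^2}-\frac{2ar_1}{x}+2(1-2a)r_2+2ar_1^2+O(x).
\]
So an $x^{-1}$ singularity survives and the potential is not bounded. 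Neither of your fallbacks is available:
\begin{itemize}
\item $\rho$ cannot be refined, since it is fixed by the definition of the unitary map $U$; any other multiplier would also reintroduce a first-order $\partial_x$ term.
\item ``Accepting an $O(1)$ bound'' is not an option for an unbounded term.
\end{itemize}
Your remark that $\partial_x\rho(0,\cdot)=0$ would give the $Cx$ bound is also off by one power of $x$. With $r_1=0$, the correction is $\frac{1-2a}{4}\,\partial_x^2\log\det g_1(0,\cdot)+O(x)$. This is only $O(1)$, unless $\beta n=2$ or $\partial_x^2\log\det g_1(0,\cdot)=0$.

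\textbf{What is missing.} The paper inserts exactly this ingredient in its Step 3. It normalizes $\partial_x\log\det g_1(0,\cdot)=0$, i.e.\ $r_1=0$, which removes the $1/x$ term. Even then it only concludes $V_x=C_\beta/x^2+O(1)$, not the $Cx$ bound on the zero-order part of $\mathcal R$ claimed in the statement.

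To close your argument, you must bring in this normalization as an input, and state the potential with the bound your computation actually yields. Be aware that this is effectively an extra hypothesis on $g_1$, not something obtainable by re-choosing coordinates or $\rho$:
\begin{itemize}
\item Since $g=dx^2+x^{-\beta}g_1(x)$, the coordinate $x$ is the $g$-distance to $\partial X$.
\item The ratio $\det g_1(x,\cdot)/\det g_1(0,\cdot)$ is the density ratio $dv_{g_1(x)}/dv_G$.
\item Hence $\partial_x\log\det g_1(0,\cdot)$ is a geometric invariant.
\end{itemize}
Without it, the correct conclusion is the potential $C_\beta/x^2-\frac{a}{2x}\partial_x\log\det g_1(0,\cdot)+O(1)$. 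The extra term does not alter the indicial roots $\frac12\pm\nu$, since $x^2\cdot x^{-1}\to0$, but it does not fit the form asserted for $\mathcal R$.
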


The lemma shows that, in the non-separable case, one has $\triangle_g = \triangle_g^{\textrm{sep}} + O(x)$ in the sense made precise in the above statement.

This is different from what is done in Section 2 of~\cite{CdVDdHT2024} where a quasi-isometry property was used. 
The lemma above gives a more precise form but uses in a crucial way the fact that the metric is smooth.
Anyway, for a use of the quasi-isometry property, see Remark \ref{rem_quasiisom} further.

\begin{proof} We proceed in several steps.
	
	\smallskip
	\emph{Step 1: Divergence form and basic ingredients.}
	In local coordinates $(x,y^1,\ldots,y^n)$, write $G(x,y)=(G_{ij}(x,y))_{1\leq i,j\leq n}$ for the matrix of $g_1(x)$ and $G^{ij}(x,y)$ for its inverse.
	The metric $g$ is block-diagonal with blocks $1$ and $x^{-\beta}G(x,y)$, hence
	$$
	g^{xx}=1,\qquad g^{ij}=x^{\beta}G^{ij}(x,y),\qquad |\det g|^{1/2}=x^{-\frac{\beta n}{2}}\big(\det G(x,y)\big)^{1/2}.
	$$
	Thus, for $f\in C_{c}^{\infty}$,
	\begin{align}
		- \widetilde\triangle_g f &= -\mathrm{div}_g\nabla_g f \nonumber\\
		&= |\det g|^{-1/2}\,\partial_x \Big(|\det g|^{1/2}\,g^{xx}\,\partial_x f\Big)
		+ |\det g|^{-1/2}\,\partial_{y^i}\Big(|\det g|^{1/2}\,g^{ij}\,\partial_{y^j}f\Big) \nonumber\\
		&= \partial_x ^2f
		+ \Big(\partial_x \log |\det g|^{1/2}\Big)\,\partial_x f
		+ x^{\beta}\triangle_{G(x)}f
		+ x^{\beta}\,\mathbf{B}^i(x,y)\,\partial_{y^i}f,
		\label{eq:raw-Delta}
	\end{align}
	where the last term collects the first-order tangential drift coming from $y$–derivatives of the density and of $G^{ij}$, and
	$$
	\partial_x \log |\det g|^{1/2} 
	= -\frac{\beta n}{2x} + \frac{1}{2}\,\mathrm{Tr}\big(G^{-1}\partial_x G\big).
	$$
	Here, $\triangle_{G(x)}=-\mathrm{div}_{G(x)}\nabla_y \geq 0$ is the Laplace-Beltrami operator on $(M,G(x))$.
	By Taylor expansion of $G(x)$ at $x=0$ we have, for some $\delta\in(0,1)$,
	\begin{equation}\label{eq:Taylor-G}
		G(x,y)=G(0,y)+x\,H(y)+\mathrm{O}_{C^{\infty}}(x^2),\qquad 
		G^{ij}(x,y)=G^{ij}(0,y)+\mathrm{O}_{C^{\infty}}(x),
	\end{equation}
	and\footnote{\emph{Notation $\mathrm{O}_{C^{\infty}}(x^m)$.}
		Given any smooth scalar function or tensor field $A(x,y)$ on $[0,\delta)\times M$, the notation $A(x,y)=\mathrm{O}_{C^{\infty}}(x^m)$ as $x\rightarrow 0^+$ if there exists $\delta>0$ such that, for all integers $k,\ell\geq 0$, there exists $C_{k,\ell}>0$ such that
		$$
		\sup_{0<x\leq\delta}\ \sup_{y\in M}\ \big\| \partial_x^{\,k}\nabla_y^{\,\ell} A(x,y)\big\|
		\ \leq\ C_{k,\ell}\, x^{\,m-k}.
		$$
		Here $\nabla_y^{\,\ell}$ denotes any fixed family of tangential differential operators of order $\ell$ with smooth coefficients (e.g. iterated covariant derivatives for a chosen background connection on $M$); the choice is immaterial when $M$ is compact.
		Equivalently, $A(x,y)=x^mB(x,y)$ with $B\in C^{\infty}([0,\delta)\times M)$ and all mixed derivatives $\partial_x^{k}\nabla_y^{\ell}B$ uniformly bounded on $[0,\delta)\times M$.
		In particular, tangential derivatives preserve the power $x^m$, while each $x$–derivative lowers it by one: $\partial_x^{k}A=\mathrm{O}_{C^{\infty}}(x^{m-k})$. 
		For example, $G(x,y)=G(0,y)+x\,H(y)+\mathrm{O}_{C^{\infty}}(x^2)$ means every component (in local charts) satisfies these estimates.}
	\begin{equation}\label{eq:trGprime}
		\mathrm{Tr}\big(G^{-1}\partial_x G\big)(x,y)
		= \mathrm{Tr}\big(G(0,y)^{-1}H(y)\big)+\mathrm{O}_{C^{\infty}}(x).
	\end{equation}
	Moreover, in \eqref{eq:raw-Delta}, we have $\mathbf{B}^i(x,y)=\mathrm{O}_{C^{\infty}}(1)$ and $\mathbf{B}^i(0,y)=0$ because the density factor for the tangential block does not depend on $y$ when $x=0$ (see Step~3 below).
	
	\smallskip
	\emph{Step 2: Unitary conjugation to the product measure.}
	Recall that $U$, defined by \eqref{eq:unitary-U}, is unitary from $L^2(X,dv_g)$ onto $L^2\big((0,1)\times M,dx\,dv_G(y)\big)$. Write $m=e^{-\phi}$ with
	$$
	\phi(x,y) = -\ln m(x,y) = \frac{\beta n}{4}\log x - \frac{1}{4}\log\frac{\det G(x,y)}{\det G(0,y)}.
	$$
	For any smooth $f$,
	\begin{equation}\label{eq:conj}
		\triangle_g f = e^{-\phi}\widetilde\triangle_g\big(e^{\phi}f\big)
		= \widetilde\triangle_gf - 2\langle\nabla\phi,\nabla f\rangle_g + \big(\widetilde\triangle_g\phi - |\nabla\phi|_g^2\big)\,f.
	\end{equation}
	We analyze separately the $x$- and $y$-parts of the terms in \eqref{eq:conj}.

	\smallskip
	\emph{Step 3: Cancellation of the singular first derivative in the normal direction.}
	Using \eqref{eq:raw-Delta}, the $x$-part of $\widetilde\triangle_g$ is $-\partial_x^2 f - b(x,y)\,\partial_x f$ with
    $$
    b=\partial_x\log|\det g|^{1/2} =-\frac{\beta n}{2x}+\frac{1}{2}\mathrm{Tr}\big(G^{-1}\partial_x G\big).
    $$
	Moreover,
	$$
	\partial_x \phi=\frac{\beta n}{4x}-\frac{1}{4}\,\partial_x \log\det G(x,y)
	=\frac{\beta n}{4x}-\frac{1}{4}\,\mathrm{Tr}\big(G^{-1}\partial_x G\big) = -\frac{1}{2}b,
	$$
	thus $b+2\,\partial_x\phi=0$.
	In the conjugation identity \eqref{eq:conj}, the cross term
    $$
    -2\langle\nabla\phi,\nabla f\rangle_g=-(b+2\partial_x\phi)\partial_x f-2\langle\nabla_y\phi,\nabla_y f\rangle_g
    $$
    cancels the entire first-order normal term (the tangential piece is absorbed into $\mathcal{T}(x,y,D_y)$).
	Therefore, the normal part of $\triangle_g$ has no first-order term and can be written as $-\partial_x^2+V_x$ where
    $$
    V_x = -\partial_x^2\phi - \big(\partial_x\phi\big)^2 - b\,\partial_x\phi =\frac{1}{2}\,\partial_x b+\frac{1}{4}\,b^2
    $$
    because $b+2\partial_x\phi=0$, 
	which makes the cancellation of any $x^{-1}\partial_x\log\det G$ term immediate. 
	After a harmless smooth redefinition of $x$ preserving the form $dx^2+x^{-\beta}G(x)$, one may assume that $\partial_x\log\det G(0,y)=0$, so that $\partial_x\log\det G(x,y)=\mathrm{O}(x)$.
	Expanding near $x=0$ yields
    $$
    V_x = \frac{C_{\beta}}{x^2}+\mathrm{O}(1)
    $$
	where the $O(1)$ term is smooth up to $x=0$ (it comes from $\partial_x \big[\mathrm{Tr}(G^{-1}\partial_x G)\big]$ and from $(\partial_x \log\det G)^2$, both bounded by \eqref{eq:Taylor-G}--\eqref{eq:trGprime}).
	
	\smallskip
	\emph{Step 4: Tangential second-order part.}
	The tangential second-order part of $\widetilde\triangle_g$ is $x^{\beta}\triangle_{G(x)}$. 
	Expanding $G(x)$ at $0$ as in \eqref{eq:Taylor-G} yields $\big\|\triangle_{G(x)}-\triangle_G\big\|_{C^1(M)}=\mathrm{O}(x)$ and we write
    $$
    x^{\beta}\triangle_{G(x)} = x^{\beta}\triangle_G + x^{\beta}\big(\triangle_{G(x)}-\triangle_G\big).
    $$
    We keep the purely tangential second-order correction in the explicit block $x^\beta(\triangle_{G(x)}-\triangle_G)$, as stated in the lemma; it has $C^1$-coefficients $O(x)$. 
	
	\smallskip
	\emph{Step 5: First-order tangential terms and mixed terms.}
	From \eqref{eq:conj} we get the additional drift $2\langle\nabla\phi,\nabla f\rangle_g$.
	Its tangential part is 
	$$
	2\,\langle\nabla_y\phi,\nabla_yf\rangle_g = 2\,x^{\beta}\,\langle \nabla^{G(x)}\phi,\nabla^{G(x)}f\rangle,
	$$
	with
	$$
	\nabla_y\phi =\frac{1}{4}\,\nabla_y\log\det G(x,y)-\frac{1}{4}\,\nabla_y\log\det G(0,y).
	$$
	At $x=0$ this vanishes identically, and by Taylor expansion it is a $O(x)$ in $C^1(M)$. Hence this yields a first-order tangential operator $\mathcal{T}(x,y,D_y)$ with coefficients $O(x)$ in $C^1(M)$.
	
	Next, the $y$–dependence of $G(x)$ in the $x$–divergence part of $\triangle_g$ also contributes first-order tangential drift terms. They are precisely the $\mathbf B^i$ terms in \eqref{eq:raw-Delta}, which vanish at $x=0$ (because $|\det g|^{1/2}g^{ij}$ then depends only on $y$ and the divergence is taken with respect to $dv_{G(0)}$) and are $O(x)$ by \eqref{eq:Taylor-G}. 
	Finally, the $y$–dependence of $\phi$ induces mixed terms $\partial_x \partial_y$ when expanding $L=e^{-\phi}\triangle_ge^{\phi}$; their coefficients are also $O(x)$ because $\partial_y\phi=\mathrm{O}(x)$ and the normal coefficients of $\triangle_g$ are smooth up to $x=0$.
	
	\smallskip
	\emph{Step 6: Collecting terms and $C^1$–bounds.}
	Steps 3, 4 and 5 yield \eqref{eq:L-normal-form}.
	The $C^1(M)$-bound follows from the expansions \eqref{eq:Taylor-G}--\eqref{eq:trGprime} and the fact that each coefficient of $\mathcal{T}$ and $\mathcal{R}$ contains at least one factor vanishing at $x=0$ (either $\partial_y\phi$, the difference $G(x)-G(0)$, or a derivative thereof), hence is a $O(x)$ as well as its $y$-derivatives.
	The lemma is proved. 
\end{proof}

\paragraph{Separable case.}
In the separable case, we have
\begin{equation}\label{eq:separated}
	\triangle_g = \triangle_g^{\textrm{sep}} = -\partial_x^2 + \frac{C_\beta}{x^2} + x^{\beta}\triangle_G
\end{equation}
where $\triangle_G$ is the Laplace-Beltrami operator on $(M,G)$, of eigenvalues $0\leq\omega_1\leq\cdots\leq\omega_j\leq\cdots$ with an orthormal eigenbasis $(\psi_j)_{j\in\N^*}$, i.e., $\triangle_G \psi_j = \omega_j \psi_j$.
After expanding a function $u$ as $u(x,y)=\sum_{j\geq 1} u_j(x)\,\psi_j(y)$ one has the direct sum decomposition
$$
\triangle_g =  \bigoplus_{k\geq 1} P_{\omega_k}
$$
where
\begin{equation}\label{eq:Pomega-app}
	P_\omega = -\partial_x^2 + \frac{C_\beta}{x^2} + \omega\,x^{\beta}
\end{equation}
is a singular Sturm-Liouville operator on $(0,1)$, for every $\omega\geq 0$.
Define the parameters
\begin{equation}\label{eq:nu-kappa}
	\nu = \frac{1}{2} + \frac{\beta n}{4}, 
	\qquad 
	\kappa = \frac{2}{\beta+2}\quad\Big(\text{equivalently } \alpha=\frac{2\beta}{\beta+2},\ \ \kappa=1-\frac{\alpha}{2}\Big).
\end{equation}
Note that $C_\beta = \nu^2-\frac{1}{4}$ and thus, near $x=0$, $P_\omega$ is a Bessel operator of index $\nu$ independent of~$\omega$.

\subsection{Analysis of the Friedrichs boundary condition at $x=0$}\label{sec:friedrichs}
We fix the boundary condition at the singular point $x=0$ by choosing the \emph{Friedrichs extension} of the minimal operator associated with $P_\omega$, for any $\omega\geq 0$.

\paragraph{Quadratic form and Friedrichs extension.}
Given any fixed $\omega\geq 0$, let $P_{\min}$ be the symmetric operator on $L^2((0,1),dx)$ with domain $C_c^\infty(0,1)$. Its natural, lower-bounded quadratic form is
\begin{equation}\label{eq:qform-app}
	\mathfrak q_\omega[u] = \int_0^1 \Big( |u'(x)|^2 + \frac{\nu^2-\frac{1}{4}}{x^2}\,|u(x)|^2 + \omega x^\beta |u(x)|^2 \Big)\,dx 
	\qquad \forall u\in C_c^\infty(0,1).
\end{equation}
Let $\overline{\mathfrak q_\omega}$ be the closure of $\mathfrak q_\omega$ in the norm $\|u\|_{\mathfrak q}^2 = \mathfrak q_\omega[u]+\|u\|_{L^2(0,1)}^2$. By the Kato-Lions-Milgram-Nelson theorem~\cite{Davies1989,Kato1995,RS1975},
the closed form $\overline{\mathfrak q_\omega}$ is densely defined and lower semicontinuous; the associated self-adjoint operator is the \emph{Friedrichs extension} $P_F$ of $P_{\min}$. This is what we mean by ``Friedrichs boundary condition at $x=0$''.

To apply it here, take
$$
q_0[u]=\int_0^1\left(|u'(x)|^2+\frac{\nu^2-\frac{1}{4}}{x^2}|u(x)|^2\right)dx,\qquad
v[u]=\int_0^1\omega\,x^\beta |u(x)|^2\,dx .
$$
Since $x^\beta\in L^\infty(0,1)$, we have $|v[u]|\leq \omega\|x^\beta\|_{L^\infty}\,\|u\|_{L^2}^2$, hence the relative bound is $a=0$. Therefore $q=q_0+v$ is closed and semi-bounded on $D(q_0)$, and the associated Friedrichs operator is precisely $P_\omega$ with the Friedrichs boundary condition at $x=0$.

\paragraph{Asymptotics at $x=0$ and the boundary constraint.}
We first recall what is Frobenius analysis near a regular singular point (for standard references see~\cite{CL1955,Olver1974,Teschl2012,Zettl2005}).
For a second–order ODE
$$
y''+p(x)y'+q(x)y=0
$$
with a \emph{regular singular point} at $x=0$
(i.e., $p_0(x) = x p(x)$ and $q_0(x) = x^2q(x)$ admit finite limits as $x\rightarrow 0^+$ and are analytic in a neighborhood),
the \emph{Frobenius method} seeks solutions of the form
$$
y(x)=x^{r}\sum_{m=0}^{\infty}a_mx^m
$$ 
with $a_0\neq 0$,
where the exponent $r$ solves the \emph{indicial equation}
$$
r(r-1)+p_0(0) r+q_0(0)=0.
$$
If the two roots $r_1\neq r_2$ do not differ by an integer, one obtains two independent series solutions
$x^{r_1}(1+\cdots)$ and $x^{r_2}(1+\cdots)$. In the resonant case ($r_1-r_2\in\N$), the second solution may contain a $\log x$ factor.

\medskip

In our setting, let us perform the Frobenius analysis of $(P_\omega-\lambda)u=0$ near $x=0$, where $P_\omega$ is defined by \eqref{eq:Pomega-app}, of associated Friedrichs form on $L^2((0,1),dx)$ given by \eqref{eq:qform-app}.
The point $x=0$ is regular singular, the lower-order term $\omega x^\beta$ ($\beta>0$) does not affect the indicial equation, and we obtain
$r_\pm=\frac{1}{2}\pm\nu$. Thus
\begin{equation}\label{eq:u_asympt}
	u(x)=A\,x^{\frac{1}{2}+\nu}(1+\mathrm{o}(1))+B\,x^{\frac{1}{2}-\nu}(1+\mathrm{o}(1))
\end{equation}
as $x\rightarrow 0^+$, with constants $A,B\in\C$ independent of $x$ (and uniform in $\omega$ for fixed $\nu,\beta$),
with a possible $\log x$ factor in the singular branch when $2\nu\in\N$.

The \emph{Friedrichs} extension is characterized by the boundary condition at $x=0$ that kills the non-energy-finite (more singular) Frobenius branch. Here, the \emph{Friedrichs} boundary condition at $x=0$ selects the regular branch ($B=0$).
Indeed, in the integral \eqref{eq:qform-app}, the ``Hardy term"
$x^{-2}|u(x)|^2$ with $u$ given by \eqref{eq:u_asympt} contributes
$$
x^{-2}|u(x)|^2\sim \begin{cases}
	|B|^2 x^{-1-2\nu} & \text{if }B\neq 0,\\[2pt]
	|A|^2 x^{-1+2\nu} & \text{if }B=0,
\end{cases}
$$
and thus $\int_0^1 x^{-2}|u(x)|^2\,dx$ converges only if $B=0$. 
Hence, being in the Friedrichs domain means that $B=0$, which is equivalent to the trace condition
\begin{equation}\label{friedrichs_condition}
	\boxed{
		\lim_{x\rightarrow 0^+} x^{-\frac{1}{2}+\nu} u(x)=0 
	}
\end{equation}
This boundary constraint characterizes the Friedrichs extension at $x=0$.

\begin{remark}[Relation with limit-point / limit-circle.]
	The endpoint $x=0$ is a
	\begin{itemize}
		\item \emph{limit-point} if $\nu\geq 1$: there is a unique self-adjoint realization (no boundary condition to impose at $0$); the singular branch is automatically ruled out by square-integrability of the energy density;
		\item \emph{limit-circle} if $\frac{1}{2}<\nu<1$: self-adjoint extensions are parameterized by a boundary condition at $0$; the Friedrichs extension corresponds to \eqref{friedrichs_condition}.
	\end{itemize}
	In both cases the Friedrichs realization is the one determined by the closed quadratic form \eqref{eq:qform-app}.
\end{remark}

\paragraph{Relevance for the wave equation and boundary observability.}
For the wave equation $\partial_t^2 u+P_\omega u=0$ the Friedrichs realization (in particular, the boundary condition \eqref{friedrichs_condition}) guarantees that:
\begin{enumerate}
	\item $P_\omega$ is self-adjoint and positive on $L^2((0,1),dx)$, so the Cauchy problem is well-posed and the spectral calculus applies.
	\item The eigenfunctions select the regular Frobenius branch at $x=0$, which ensures that the normal trace $\lim_{x\rightarrow 0^+}x^{\frac{1}{2}-\nu}\partial_x u$ is finite and well-defined (see Section~\ref{sec:neumann-obs}).
	\item The choice is \emph{uniform in $\omega$} (tangential frequency), which is crucial for transferring the one-dimensional observability to the separable multi-dimensional setting.
\end{enumerate}

Note that the conjugation has changed $\widetilde\triangle_g$ into $\triangle_g$ whose Friedrichs form is equivalent to that of $\triangle_g$. In particular, the selection of the regular branch in the Frobenius analysis is invariant under conjugation.

Since $u=mz\sim x^{-\frac{\beta n}{4}}z=x^{\frac{1}{2}-\nu}z$ near $x=0$, in the original variable $z$, the Friedrichs condition~\eqref{friedrichs_condition} is 
$$
\boxed{
	\lim_{x\rightarrow 0^+} z(x)=0
}
$$

\subsection{Definition of the Neumann-like observation trace}
\label{sec:neumann-obs}
We now claim that, in the $u$ variable, the good observation corresponding to \eqref{physical_flux} is
\begin{equation}\label{conj_obs}
	\boxed{
		\mathcal{C}_{\textrm{conj}}(u)=\lim_{x\rightarrow 0^+}x^{\frac{1}{2}-\nu}\partial_x u
	}
\end{equation}

\begin{lemma}\label{lem_trace}
	Let $u$ be in the Friedrichs domain of $\triangle_g$ on $L^2((0,1)\times M,dx\,dv_G)$.
	Then, as $x\rightarrow 0^+$,
	\begin{equation}\label{eq:frobenius-u}
		u(x,y) = x^{\frac{1}{2}+\nu} A(y)\ +\ x^{\frac{1}{2}+\nu+\rho} B(x,y),
		\qquad \nu=\frac{1}{2}+\frac{\beta n}{4},\ \ \rho>0,
	\end{equation}
	with $A\in L^2(M,dv_G)$ and $B\in C^\infty([0,1)\times M)$ bounded with values in $L^2(M,dv_G)$.
	Moreover, the following limit exists in $L^2(M,dv_G)$ and coincides with the leading coefficient:
	\begin{equation}\label{eq:ren-trace}
		\lim_{x\rightarrow 0^+}x^{\frac{1}{2}-\nu}\partial_x u(x,\cdot)=(\nu+\frac{1}{2})\,A(\cdot) .
	\end{equation}
	Consequently, recalling that $z$ is the original (geometric) unknown, related to $u$ by $u=m z$ with $m=x^{-\frac{\beta n}{4}}\rho$, we have
	\begin{equation}\label{eq:constant}
		\boxed{\ \mathcal{C}_{\textrm{phys}}(z)
			=\frac{1+\frac{\beta n}{2}}{\ \nu+\frac{1}{2}\ }\ \mathcal{C}_{\textrm{conj}}(u)\ .\ }
	\end{equation}
	In particular, $\mathcal{C}_{\textrm{phys}}$ and $\mathcal{C}_{\textrm{conj}}$ are equivalent up to the positive constant $\frac{1+\beta n/2}{\nu+1/2}=\frac{2\nu}{\nu+1/2}$.
\end{lemma}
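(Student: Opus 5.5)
The plan is to first establish the expansion \eqref{eq:frobenius-u} in the separable case by tangential spectral decomposition, then use it to get the trace identity \eqref{eq:ren-trace}, and finally read off \eqref{eq:constant} by direct differentiation of $z=m^{-1}u$.

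\emph{Separable case.} Expand $u(x,y)=\sum_j u_j(x)\psi_j(y)$. If $u$ lies in the Friedrichs domain of $\triangle_g$, then each $u_j$ lies in the Friedrichs domain of $P_{\omega_j}$ and $f_j:=P_{\omega_j}u_j\in L^2(0,1)$, with $\sum_j\|f_j\|^2<\infty$.

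Near $x=0$ we invert the Bessel part $-\partial_x^2+C_\beta x^{-2}$ by variation of constants. The fundamental pair is $x^{\frac12\pm\nu}$, with Wronskian $2\nu$. The Friedrichs condition \eqref{friedrichs_condition} removes the $x^{\frac12-\nu}$ branch. This gives
\[
u_j(x)=A_jx^{\frac12+\nu}+\frac{1}{2\nu}\int_0^x\big(x^{\frac12+\nu}s^{\frac12-\nu}-x^{\frac12-\nu}s^{\frac12+\nu}\big)\big(f_j(s)-\omega_js^\beta u_j(s)\big)\,ds .
\]
By Cauchy--Schwarz, the integral term is $O(x^{2})$ times $\|f_j\|_{L^2}+\omega_j\|u_j\|$, at least when $\nu>1$ (for $\nu\le 1$ one gets $O(x^{\frac12+\nu+\rho})$ with some $\rho\in(0,1]$). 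The $\omega_j x^\beta u_j$ term is handled by iteration, using the a priori bound $u_j=O(x^{\frac12+\nu})$ coming from the form domain. It is therefore of order $x^{\frac12+\nu+\beta}\,\omega_j$.

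Summing over $j$ requires control of $\omega_j\|u_j\|$ in $\ell^2$. This is the step I expect to be the main obstacle, because the domain of $\triangle_g$ only controls $x^{\beta}\triangle_G u$ in a weighted sense near $0$. I would first handle it via the quadratic form: it gives $\int x^\beta\,\omega_j|u_j|^2<\infty$, and the weight $x^\beta$ is compensated by the extra power in the remainder. Failing that, I would settle for $B$ bounded with values in $L^2(M)$ only after a slight loss in $\rho$, taking $\rho<\min(\beta,2\nu,\ldots)$. The coefficient $A_j$ is obtained as the limit $\lim x^{-\frac12-\nu}u_j$, and the bounds give $\sum_j|A_j|^2\lesssim\|u\|^2+\|\triangle_g u\|^2$, so $A\in L^2(M)$.

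\emph{Trace formula.} Differentiating the representation gives
\[
x^{\frac12-\nu}\partial_xu=(\tfrac12+\nu)A+O(x^\rho)
\]
in $L^2(M)$. The derivative of the integral term is bounded by the same Cauchy--Schwarz estimates. This yields \eqref{eq:ren-trace}.

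\emph{Non-separable case.} By Lemma~\ref{lem_normal_form}, the operators $\mathcal T$, $\mathcal R$ and $x^\beta(\triangle_{G(x)}-\triangle_G)$ have coefficients $O(x)$. I would move them to the right-hand side, which effectively replaces $f$ by $f$ plus these terms, and iterate once more. The extra factor $x$ only improves $\rho$, possibly to $\min(\rho,1)$.

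\emph{Relation \eqref{eq:constant}.} Write $z=x^{\nu-\frac12}\rho^{-1}u$, using that $\beta n/4=\nu-\frac12$. Then
\[
z=\rho^{-1}\big(x^{2\nu}A+O(x^{2\nu+\rho})\big),\qquad \partial_x z=2\nu x^{2\nu-1}A+o(x^{2\nu-1}),
\]
where $\rho(x,y)$ is smooth with $\rho(0,y)=1$, so its derivative only contributes lower-order terms. Multiplying by $x^{-\beta n/2}=x^{1-2\nu}$ gives
\[
\mathcal C_{\rm phys}(z)=2\nu A=\frac{2\nu}{\nu+\frac12}\,\mathcal C_{\rm conj}(u).
\]
Finally, $2\nu=1+\beta n/2$, which gives exactly the constant in \eqref{eq:constant}.
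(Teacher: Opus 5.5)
There is a genuine gap. It is not in the $\ell^2$-summation you flag as the main obstacle, but in the remainder estimate you treat as routine.

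\emph{The remainder estimate fails for $\nu\ge1$.} With only $f_j=P_{\omega_j}u_j\in L^2(0,1)$, Cauchy--Schwarz gives $x^{\frac12-\nu}\int_0^x s^{\frac12+\nu}|f_j(s)|\,ds\le C\,x^{3/2}\|f_j\|_{L^2(0,x)}$, not $O(x^2)$. The other half of your kernel, $x^{\frac12+\nu}\int_0^x s^{\frac12-\nu}f_j(s)\,ds$, is not even defined for $\nu\ge1$, because $s^{\frac12-\nu}\notin L^2(0,x)$. So your representation of $u_j$ exists only for $\nu<1$; its integral term should also carry a minus sign, which is harmless. For $\nu<1$, keeping $\omega_jx^\beta u_j$ inside the $L^2$ forcing, it does give the expansion and the trace with $\rho=1-\nu$. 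For $\nu\ge1$, a remainder of size $x^{3/2}$ is not of lower order than $x^{\frac12+\nu}$. Even your claimed $O(x^2)$ would not be once $\nu\ge\frac32$. So no loss in $\rho$ rescues the argument. The input ``$u_j=O(x^{\frac12+\nu})$ from the form domain'' is also unavailable: the form domain only yields $|u_j(x)|\le x^{1/2}\|u_j'\|_{L^2(0,x)}$, so using it to treat the $\omega_jx^\beta u_j$ term is circular.

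\emph{This cannot be patched at the stated generality.} For $\nu\ge1$ the conclusion is false for general elements of the Friedrichs domain; this includes the physically relevant case $\beta=2$, $n=2$, $\nu=\frac32$. In the separable model, take $\psi_1$ constant ($\omega_1=0$), $0<\varepsilon<\nu-1$, a cutoff $\chi$ equal to $1$ near $0$ and vanishing near $1$, and $u=\chi(x)x^{\frac32+\varepsilon}\psi_1$. Near $x=0$,
\[
\triangle_g u=\Big(\nu^2-\tfrac14-\big(\tfrac32+\varepsilon\big)\big(\tfrac12+\varepsilon\big)\Big)x^{-\frac12+\varepsilon}\psi_1\in L^2,
\]
and $u$ lies in the form domain, so $u\in D(\triangle_g)$. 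Yet $x^{\frac12-\nu}\partial_xu=(\frac32+\varepsilon)x^{1+\varepsilon-\nu}\psi_1$ blows up, and no finite $A$ exists. For $\nu=1$, use $x^{3/2}(\log\frac1x)^{\gamma}$ with $0<\gamma<\frac12$ instead. Similarly, $B$ cannot be $C^\infty$ when $\triangle_g u$ is merely in $L^2$.

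\emph{What the paper actually proves, and where your method works.} The paper's Frobenius argument avoids this obstruction only because it writes the projected equation as a homogeneous regular-singular ODE: right-hand side $O(x)$, with no $(\triangle_g u)_j$ term. In effect it treats eigenfunctions. That is also all that is used later: the $\varphi_{n,\omega}$ in the separable proof and the Fattorini--Hautus lemmas. The trace of a finite-energy solution is obtained from the modal sum and Ingham in $L^2(0,T;L^2(M))$, not pointwise in time. Restricted to eigenfunctions in the separable case, your method does close. The forcing becomes $(\lambda-\omega_js^\beta)u_j$. The kernel $x^{\frac12+\nu}s^{\frac12-\nu}-x^{\frac12-\nu}s^{\frac12+\nu}$ lies between $0$ and $x^{\frac12+\nu}s^{\frac12-\nu}$ for $0<s\le x$. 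Hence $v=x^{-\frac12-\nu}u_j$ solves a Volterra equation with bounded kernel, giving $v=A_j+O(x^2)$ and $x^{\frac12-\nu}u_j'=(\nu+\frac12)A_j+O(x^2)$ for every $\nu$.

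Your final computation of the constant $\frac{2\nu}{\nu+1/2}$ from $z=x^{\nu-\frac12}\rho^{-1}u$ is correct. It is also more explicit than the paper, which states that identity without deriving it.
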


\begin{proof}
	We use the expansion
    $$
    u(x,y)=\sum_{j\ge0}u_j(x)\,\psi_j(y)
    $$
    The normal form of $\triangle_g$ near $x=0$ is
    $$
    \triangle_g=-\partial_x^2+\frac{\nu^2-\frac{1}{4}}{x^2}+x^\beta\triangle_G+\mathrm{O}(x)
    $$
	Projecting onto $\psi_j$ gives near $x=0$ a the regular singular ODE
	$$
	-u_j''(x)+\frac{\nu^2-\frac{1}{4}}{x^2}u_j(x)+\omega_j x^\beta u_j(x)=\mathrm{O}(x).
	$$
	The Frobenius theory recalled in the previous section gives (the $O(x)$ coefficients do not alter the indicial equation)
	$$
	u_j(x)=A_j x^{\frac{1}{2}+\nu} + x^{\frac{1}{2}+\nu+\rho}B_j(x),\qquad \rho>0,
	$$
	with $B_j$ smooth and bounded (the Friedrichs domain excludes the singular branch $x^{\frac{1}{2}-\nu}$).
	Set
    $$
    A(y)=\sum_j A_j \psi_j(y)
    \qquad\text{and}\qquad
    B(x,y)=\sum_j B_j(x)\psi_j(y)
    $$
    with $A\in L^2(M,dv_G)$ and $B$ bounded in $L^2(M,dv_G)$.
	We have obtained \eqref{eq:frobenius-u}.
	
	Now, we compute
	$$
	\partial_x u(x,y)=(\nu+\frac{1}{2})\,A(y)\,x^{-\frac{1}{2}+\nu}+x^{-\frac{1}{2}+\nu+\rho}\,C(x,y),
	$$
	with $C\in L^2(M,dv_G)$.
	Multiplying by $x^{\frac{1}{2}-\nu}$ gives
	$$
	x^{\frac{1}{2}-\nu}\partial_x u(x,\cdot)=(\nu+\frac{1}{2})A(\cdot)+x^{\rho}C(x,\cdot).
	$$
	Since $\rho>0$ and $C\in L^2$, the remainder tends to $0$ in $L^2(M,dv_G)$ as $x\rightarrow 0^+$, hence \eqref{eq:ren-trace}.
\end{proof}

\begin{remark}
	When $M$ is a point, we have $n=0$ so that $X=(0,1)$ and we recover the 1D case studied in Section~\ref{sec:1D-case}, with $\nu=\frac{1}{2}$. In this case, 
	$$
	\mathcal{C}_{\textrm{conj}}=\partial_x u(0)=\mathcal{C}_{\textrm{phys}}z =\partial_x z(0)
	$$
	i.e., the observation is the classical Neumann trace.
\end{remark}

\subsection{Boundary observability inequality}
\label{sec:boundary-obs-inequality}

\subsubsection{``Anisotropic" Sobolev spaces}
For $s\in\R$, let $H^{s}_x(0,1)$ be the (Bessel) Sobolev scale along the normal variable defined in Section~\ref{sec:1D-case} via the spectral powers of the one-dimensional Bessel operator (equivalently, using the basis from Proposition~\ref{prop:bessel-functions}). Define the ``anisotropic" Sobolev spaces
\begin{equation}\label{eq:anisotropic}
	H^{s}_x L^2(M) 
	= \Big\{ F\in L^2\big((0,1)\times M\big) \ \mid\ F(\cdot,y)=\sum_{j\geq 1} f_j(\cdot)\,\psi_j(y),\ \ \sum_{j\geq 1}\|f_j\|_{H^s_x(0,1)}^2<\infty \Big\},
\end{equation}
with the obvious norm. By functional calculus on $M$, for $\theta\geq 0$ we further set
\begin{equation}\label{eq:anisotropic-omega}
	\|(\triangle_G^\theta) F\|_{H^{s}_x L^2(M)}^2 = \sum_{j\geq 1} \omega_j^{2\theta}\,\|f_j\|_{H^{s}_x(0,1)}^2.
\end{equation}

\subsubsection{Proof of the main theorem}

Let us recall the setting of the main theorem (Theorem~\ref{thm:obs-inequality}). Let $u$ solve \eqref{eq:wave} with Friedrichs boundary condition at $x=0$ and either Dirichlet or Neumann at $x=1$.  The claim is that for every $T > T^* = 2/\kappa = \beta + 2$, there exists constants $0 < c_T \leq C_T$ such that
\begin{equation}
	c_T\,\mathcal{E}_\nu[u_0,u_1]
	\ \leq\ 
	\int_0^T\int_M \Big\vert \lim_{x\rightarrow 0^+}x^{\frac{1}{2}-\nu}\partial_x u(x,y,t)\Big\vert^2\, dv_G(y)\,dt
	\ \leq\ 
	C_T\,\mathcal{E}_\nu[u_0,u_1].
\end{equation}
The anisotropic energy is defined using the function spaces~\eqref{eq:anisotropic} by
\begin{equation}
		\displaystyle 
		\mathcal{E}_\nu[u_0,u_1]
		= \|u_0\|_{H^{\nu+\frac{1}{2}}_x L^2(M)}^2
		+ \|u_1\|_{H^{\nu-\frac{1}{2}}_x L^2(M)}^2
		+ \big\|\triangle_G^{1/2} u_0 \big\|_{H^{\nu-\frac{1}{2}}_x
		L^2(M)}^2.
\end{equation}

Before moving to the proof of the main theorem, we some comments are in order.

\begin{remark}\label{rem_quasiisom}
	In \eqref{energies}, it is understood that $L^2(M)$ is considered with respect to the volume $dv_G$.
	By quasi-isometry near $x=0$ (see~\cite[Remark~1 and Appendix A.1]{CdVDdHT2024}), the choice of the tangential metric $G=g_1(0)$ in \eqref{energies} is not important: replacing $\triangle_G$ by $\triangle_{g_1(x_0)}$ for any fixed $x_0\in[0,1)$ yields an equivalent energy.
	
	This indeed follows from Lemma \ref{lem:equiv-frozen-metric} given in Appendix \ref{app:equiv-frozen-metric}.
	More precisely, fix any $x_0\in[0,1)$ and set $h_0=G=g_1(0)$, $h_1=g_1(x_0)$. Since $g_1(\cdot)$ is $C^\infty$ in $x$ and $M$ is compact, we have \eqref{eq:qi}--\eqref{eq:vol-comp} in Lemma \ref{lem:equiv-frozen-metric} with constants independent of $x_0$ in compact subsets of $[0,1)$.
	Taking $s=\nu-\frac{1}{2}$ in \eqref{eq:vector-valued} yields
	$$
	\big\|\triangle_g^{1/2}u_0\big\|_{H^{\nu-\frac{1}{2}}_x L^2(M,dv_g)}
	\ \asymp\
	\big\|\triangle_{g_1(x_0)}^{1/2}u_0\big\|_{H^{\nu-\frac{1}{2}}_x L^2(M,dv_{g_1(x_0)})},
	$$
	and similarly the zero-th order terms in $\mathcal{E}_\nu$ are equivalent by \eqref{eq:form-equivalence} with $\nabla\varphi\equiv 0$. Hence the energy in \eqref{energies} is unchanged up to multiplicative constants when replacing $\triangle_g$ by $\triangle_{g_1(x_0)}$.
\end{remark}

\begin{remark}
	The observability inequality \eqref{main_obs} is written in the $u$ coordinate. Let us express it in the initial (geometric) $z$ coordinate.
	Recall that $u=Uz=mz$ with $m=x^{-\frac{\beta n}{4}}\rho$ (unitary map $U:L^2(X,dv_g)\to L^2((0,1)\times M,dx\,dv_G)$, $\rho(0,\cdot)=1$), that
	\begin{align*}
		\mathcal{C}_{\mathrm{conj}}u &=\lim_{x\rightarrow 0^+}x^{-(\nu-\frac{1}{2})}\partial_x u\in L^2(M,dv_G), \\
		\mathcal{C}_{\mathrm{phys}}z &=\lim_{x\rightarrow 0^+}|\det g|^{1/2}g^{xx}\partial_x z\in L^2(M,dv_G),
	\end{align*}
	with $\nu=\frac{1}{2}+\frac{\beta n}{4}$, and that $\mathcal{C}_{\mathrm{phys}}z=\frac{1+\frac{\beta n}{2}}{\ \nu+\frac{1}{2}\ }\;\mathcal{C}_{\mathrm{conj}}u$.
	By unitarity of $U$ and $\triangle_g=U\widetilde\triangle_gU^{-1}$, the energies coincide:
	\begin{equation}\label{eq:energy-unitary}
		\mathcal{E}_\nu[u_0,u_1]=\widetilde{\mathcal{E}}_\nu[z_0,z_1],
		\qquad (u_0,u_1)=U(z_0,z_1),
	\end{equation}
	where $\mathcal{E}_\nu$ is the (product-measure) anisotropic energy used in the conjugated gauge and $\widetilde{\mathcal{E}}_\nu$ is the geometric Friedrichs energy of $\widetilde\triangle_g$ in $L^2(X,dv_g)$.
	
	Consequently, the observability inequality proved in the conjugated gauge (for $T>2/\kappa$)
	$$
	c_T\,\mathcal{E}_\nu[u_0,u_1]
	\ \leq\ \int_0^T\!\|\mathcal{C}_{\mathrm{conj}}u(\cdot,t)\|_{L^2(M,dv_G)}^2\,dt
	\ \leq\ C_T\,\mathcal{E}_\nu[u_0,u_1]
	$$
	is equivalent to the \emph{physical} inequality
	$$
	c_T^{\mathrm{phys}}\,\widetilde{\mathcal{E}}_\nu[z_0,z_1]
	\ \leq\ \int_0^T\!\|\mathcal{C}_{\mathrm{phys}}z(\cdot,t)\|_{L^2(M,dv_G)}^2\,dt
	\ \leq\ C_T^{\mathrm{phys}}\,\widetilde{\mathcal{E}}_\nu[z_0,z_1]
	$$
	with $c_T^{\mathrm{phys}} =\Big(\frac{1+\beta n/2}{\nu+1/2}\Big)^2\,c_T$ and $C_T^{\mathrm{phys}}
	=\Big(\frac{1+\beta n/2}{\nu+1/2}\Big)^2\,C_T$.
\end{remark}

For clarity, and because the arguments are of different natures, we first make the proof in the separable case, and then in the general, non-separable case. 

\subsubsection{Proof of Theorem \ref{thm:obs-inequality} in the separable case}\label{sec_proof_main_thm_separable}
In the separable case, the proof consists of a uniform-in-$k$ reduction to Theorem~\ref{thm:1d-observability} of Section~\ref{sec:1D-case}.

\smallskip
\noindent\emph{Modal reduction.}
Expanding $u(t,x,y)=\sum_{k\geq 1} u_k(t,x)\psi_k(y)$, we have
\begin{equation}\label{eq:modal-wave}
	\partial_t^2 u_k + P_{\omega_k} u_k = 0,
	\qquad 
	P_{\omega} = -\partial_x^2 + \frac{\nu^2-\frac{1}{4}}{x^2} + \omega x^{\beta}\quad \text{on }(0,1),
\end{equation}
with the same boundary condition at $x=1$ and the Friedrichs condition at $x=0$. Let $\{\lambda_n(\omega)\}_{n\geq 1}$ be the eigenvalues of $P_{\omega}$ (increasing order), and denote $\mu_n(\omega)=\sqrt{\lambda_n(\omega)}$. 
The corresponding (real) eigenfunctions $(\varphi_{n,\omega})_{n\geq 1}$ have the Frobenius asymptotics, uniform in $\omega$:
$$
\varphi_{n,\omega}(x)=A_{n,\omega}\,x^{\frac{1}{2}+\nu}\big(1+\mathrm{O}(x)\big),
\qquad
x^{\frac{1}{2}-\nu}\partial_x\varphi_{n,\omega}(x)\to(\nu+\frac{1}{2})A_{n,\omega},
$$
as $x\rightarrow 0^+$. Hence
\begin{equation}\label{eq:ren-time-trace}
	\mathcal{C}_{\mathrm{conj}}u_k(t)
	=\sum_{n\geq 1} a_{n,k}\,e^{i\mu_n(\omega_k)t}
	+\sum_{n\geq 1}\overline{a_{n,k}}\,e^{-i\mu_n(\omega_k)t},
\end{equation}
where the coefficients $a_{n,k}$ are linear in the spectral coefficients of the initial data $(u_{0,k},u_{1,k})$ against $\{\varphi_{n,\omega_k}\}$. The proportionality constants are \emph{uniform} in $\omega$.
By the Parseval theorem,
\begin{equation}\label{eq:time-trace}
	\int_0^T\!\!\int_M \big|\mathcal{C}_{\mathrm{conj}}u(\cdot,t)\big|^2\,dv_G\,dt
	= \sum_{k\geq 1} \int_0^T \left|\sum_{n\in\mathbb Z} a_{n,k}\, e^{i\tilde\mu_n(\omega_k) t}\right|^2 dt,
\end{equation}
where $\tilde\mu_{\pm n}(\omega)=\pm\mu_n(\omega)$.

\smallskip\noindent\emph{Ingham step (uniform constants).}
For each fixed $\omega\geq 0$, the nonharmonic set $\{\tilde\mu_n(\omega)\}_{n\in\mathbb{Z}}$ satisfies the uniform gap condition. Indeed, to be very precise, it has the Beurling upper density
$D^+=\frac{1}{\pi\kappa}$, identical to the case $\omega=0$. This follows from the Weyl law for singular Sturm--Liouville operators: adding the bounded potential $\omega x^\beta$ does not change the linear asymptotics $\mu_n(\omega)\sim \kappa\pi n$ as $n\to\infty$.
Hence, by the Ingham theorem (see Remark~\ref{rem:ingham})
for every $T>2\pi D^+=\frac{2}{\kappa}$ there exists $c_T>0$ (depending on $T$ and $\kappa$ but \emph{independent} of $\omega$) such that
$$
c_T \sum_{n\in\mathbb{Z}} |a_{n,k}|^2
\ \leq\ 
\int_0^T \left|\sum_{n\in\mathbb{Z}} a_{n,k}\, e^{i\tilde\mu_n(\omega_k) t}\right|^2 dt
\ \leq\ 
C_T \sum_{n\in\mathbb{Z}} |a_{n,k}|^2,
\qquad \forall k\geq 1.
$$
Summing over $k$ and using \eqref{eq:time-trace} yields
\begin{equation}\label{eq:frame}
	\int_0^T\!\!\int_M \big|\mathcal{C}_{\mathrm{conj}}u(\cdot,t)\big|^2\, dv_G\,dt
	\ \asymp_T\ \sum_{k\geq 1}\sum_{n\in\mathbb{Z}} |a_{n,k}|^2 .
\end{equation}
In particular, in \eqref{eq:frame} the constants are \emph{uniform} with respect to $\omega_k$, $n$ and the initial data.

\smallskip\noindent
\emph{Identification of the energy (uniform in $k$).}
Exactly as in Section~\ref{sec:1D-case} (Theorem~\ref{thm:1d-observability}), one has uniformly in $\omega$ that
$$
\sum_{n\in\Z} |a_{n,k}|^2 \ \asymp\  \|f_{0,k}\|_{H^{\nu+\frac{1}{2}}_x(0,1)}^2 + \|f_{1,k}\|_{H^{\nu-\frac{1}{2}}_x(0,1)}^2 + \omega_k\,\|f_{0,k}\|_{H^{\nu-\frac{1}{2}}_x(0,1)}^2,
$$
where $(f_{0,k},f_{1,k})$ are the modal initial data of $u$.
Summing over $k$ and using \eqref{eq:anisotropic}-\eqref{eq:anisotropic-omega} gives
$$
\sum_{k\geq 1}\sum_{n\in\Z} |a_{n,k}|^2 \ \asymp\  \|u_0\|_{H^{\nu+\frac{1}{2}}_x L^2(M)}^2 + \|u_1\|_{H^{\nu-\frac{1}{2}}_x L^2(M)}^2 + \big\|\triangle_G^{1/2} u_0 \big\|_{H^{\nu-\frac{1}{2}}_x L^2(M)}^2.
$$
Combining with \eqref{eq:frame} proves \eqref{main_obs}. The optimality of $T_\ast$ follows from the density computation (as in Section~\ref{sec:1D-case}, Remark~\ref{rem:upper-density}).

This finishes the proof of the theorem in the separable case.

\begin{remark}
	The only point where the dependence on $\omega_k$ matters is the Ingham step. The crucial facts are: 
	\begin{itemize}
		\item near $x=0$ the Bessel index $\nu$ is \emph{independent of $\omega$}, so the boundary coupling coefficients $a_{n,k}$ have the same structure as in Section~\ref{sec:1D-case}; 
		\item the spectral counting function for $P_\omega$ satisfies $N(\mu;\omega)\sim \frac{\mu}{\pi\kappa}$ as $\mu\to\infty$, uniformly in $\omega\geq 0$, hence the upper density $D^+$ and the threshold $T_\ast=2\pi D^+=2/\kappa$ are uniform in~$\omega$.
	\end{itemize}
\end{remark}

\begin{remark}
	Writing $u_j=\sum_{k\geq 1} f_{j,k}\psi_k$ in \eqref{energies} recovers the energy
	$$
	\sum_{k\geq 1}\Big(\|f_{0,k}\|_{H_x^{\nu+\frac{1}{2}}}^2
	+\|f_{1,k}\|_{H_x^{\nu-\frac{1}{2}}}^2
	+\omega_k\,\|f_{0,k}\|_{H_x^{\nu-\frac{1}{2}}}^2\Big),
	$$
	which is the expansion used in the separable analysis (and is what we have computed in the proof).
\end{remark}

\subsubsection{Proof of Theorem \ref{thm:obs-inequality} in the non-separable case}
\label{sec:proof-non-separable}
The non-separable case can be tackled as a perturbation of the separable model. 
Recall that $\triangle_g = \triangle_g^{\textrm{sep}} + O(x)$.
More precisely, we write the full operator as a perturbation of $\triangle_g^{\textrm{sep}}$:
$$
\triangle_g = \triangle_g^{\textrm{sep}} + K,\qquad K=\mathcal{T}(x,y,D_y) + x^{\beta}\big(\triangle_{G(x)}-\triangle_G\big) + \mathcal{R}(x,y,D_x ,D_y),
$$
where $\mathcal{T}$ is tangential of order $1$, $x^\beta(\triangle_{G(x)}-\triangle_G)$ is tangential of order $2$ with $C^1$-coefficients $O(x)$, and $\mathcal{R}$ collects a $0$-th order potential, mixed terms $\partial_xD_y$, and a possible first-order $\partial_x$ part,
all with $C^1$-coefficients $O(x)$.
Set
$$
\mathcal{H}=H^{\nu+\frac{1}{2}}_x L^2(M)\times H^{\nu-\frac{1}{2}}_x L^2(M),
\qquad 
\|(z_0,z_1)\|_{\mathcal{H}}^2 =\|z_0\|_{H^{\nu+\frac{1}{2}}_x L^2(M)}^2  +\|z_1\|_{H^{\nu-\frac{1}{2}}_x L^2(M)}^2,
$$
and denote, for short, by $\mathcal{C}$ the admissible observation operator (for reminders on admissibility, see~\cite{Trelat2024,TW2009}). 
Let
$$
\mathcal{A}_0=\begin{pmatrix}0&I\\-\triangle_g^{\textrm{sep}}&0\end{pmatrix}
$$
be the wave operator (with Friedrichs realization at $x=0$), of domain $D(\mathcal{A}_0)$, generating the wave group in the separable case.
By Theorem~\ref{thm:obs-inequality}, the pair $(\mathcal{A}_0,\mathcal{C})$ is exactly observable on $[0,T]$ for every $T>2/\kappa$, i.e., 
\begin{equation}\label{eq:obs-A0}
	\exists\,c_T>0\ \mid\ \forall Z_0\in\mathcal{H}\qquad \int_0^T\|\mathcal{C} e^{t\mathcal{A}_0}Z_0\|_{L^2(M)}^2\, dt \geq c_T\,\|Z_0\|_{\mathcal{H}}^2.
\end{equation}

Let us now develop the perturbation argument.
Fix a smooth cutoff $\chi\in C_c^\infty([0,2\delta))$ with $\chi\equiv1$ on $[0,\delta]$ and set
\begin{equation}\label{eq:K-split}
	K_{\mathrm{near}}=\chi(x)\,K,\qquad K_{\mathrm{far}}=(1-\chi(x))\,K,\qquad K=K_{\mathrm{near}}+K_{\mathrm{far}}.
\end{equation}
By Lemma \ref{lem_normal_form} and the equivalence of tangential energies (Remark \ref{rem_quasiisom} and Lemma \ref{lem:equiv-frozen-metric}), $K_{\mathrm{near}}:D(\mathcal{A}_0)\rightarrow\mathcal{H}$ is bounded and
\begin{equation}\label{eq:near-small}
	\|K_{\mathrm{near}}\|_{\mathcal{L}(D(\mathcal{A}_0),\mathcal{H})}\leq C\,\delta,
\end{equation}
for some $C>0$ not depending on $\delta$.
On the other hand, $K_{\mathrm{far}}$ is supported in $\{x\geq\delta\}$ where all coefficients are smooth and bounded; there, the embedding $D(\mathcal{A}_0)\hookrightarrow \mathcal{H}$ is compact in the $x$-variable on $(\delta,1)$, hence $K_{\mathrm{far}}:D(\mathcal{A}_0)\rightarrow \mathcal{H}$ is a compact operator.

\medskip

We now perturb in two steps.

First, we use the fact that exact observability is stable under any compact perturbation 
(see~\cite[Theorem 1.1]{DO2018}). 
Since $K_{\mathrm{far}}:D(\mathcal{A}_0)\rightarrow \mathcal{H}$ is compact and $\mathcal{C}$ is admissible for $\mathcal{A}_0$, 
and since the Fattorini--Hautus property holds for $(\mathcal{A}_1^*,\mathcal{C})$ by Lemma~\ref{lem:FH-compact} hereafter,
there exists $c_T'>0$ such that
\begin{equation}\label{eq:obs-A1}
	\int_0^T\|\mathcal{C} e^{t(\mathcal{A}_0+\mathcal{B})}Z_0\|_{L^2(M)}^2dt
	\geq c_T'\,\|Z_0\|_{\mathcal{H}}^2
	\qquad\text{for }\ \mathcal{B}=\begin{pmatrix}0&0\\-K_{\mathrm{far}}&0\end{pmatrix}.
\end{equation}

\begin{lemma}[Fattorini--Hautus near the singular boundary: compactly perturbed case]\label{lem:FH-compact}
	Let 
	$$
	\mathcal{A}_0=\begin{pmatrix}0&I\\-\triangle_g^{\textrm{sep}}&0\end{pmatrix}, 
	\qquad
	\mathcal{A}_1=\begin{pmatrix}0&I\\-\triangle_g^{\textrm{sep}}-K_{\mathrm{far}}&0\end{pmatrix},
	$$
	where $K_{\mathrm{far}}$ is supported in $\{x\geq\delta\}$ and has smooth bounded coefficients.
	Then
	$$
	\ker(\lambda I-\mathcal{A}_1^*)\cap\ker\mathcal{C} = \{0\} \qquad\forall \lambda\in\C.
	$$
\end{lemma}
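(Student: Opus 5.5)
The approach is to reduce the Fattorini--Hautus condition to a unique continuation statement for an elliptic eigenvalue problem from the singular boundary, and then exploit the fact that near $x=0$ the operator $\triangle_g^{\textrm{sep}}+K_{\mathrm{far}}$ coincides with the separable one. Concretely, let $\lambda\in\C$ and $Z=(z_0,z_1)\in\ker(\lambda I-\mathcal{A}_1^*)\cap\ker\mathcal{C}$. Computing $\mathcal{A}_1^*$ in the energy pairing, I expect (up to the harmless identification of $\mathcal{H}$ with its dual via the spectral scale) that the first component satisfies a stationary problem
\begin{equation}
(\triangle_g^{\textrm{sep}}+K_{\mathrm{far}}^*)\,w=-\lambda^2 w,
\end{equation}
with $w$ in the Friedrichs domain at $x=0$, the boundary condition at $x=1$, and $\mathcal{C}w=\lim_{x\to0^+}x^{\frac12-\nu}\partial_xw=0$. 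The second component is recovered as $\pm\lambda w$, so it suffices to show $w=0$. The formal adjoint $K_{\mathrm{far}}^*$ is still supported in $\{x\geq\delta\}$ with smooth coefficients, which is all we need.

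\emph{Step 1: vanishing on the collar $(0,\delta)\times M$.} On $(0,\delta)$, $K_{\mathrm{far}}^*=0$, so $w$ solves the separable equation there. Expand $w=\sum_k w_k(x)\psi_k(y)$. Each $w_k$ then solves the ODE $P_{\omega_k}w_k=-\lambda^2w_k$ on $(0,\delta)$. By the Frobenius analysis of Section~\ref{sec:friedrichs}, the Friedrichs condition selects the regular branch, so $w_k=A_kx^{\frac12+\nu}(1+\mathrm{o}(1))$. By Lemma~\ref{lem_trace}, $\mathcal{C}w=(\nu+\frac12)A=0$, hence every $A_k=0$. A solution of this second-order ODE with vanishing coefficient on both Frobenius branches vanishes identically on $(0,\delta)$, by uniqueness of the Frobenius solution with prescribed leading coefficients. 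Therefore $w\equiv0$ on $(0,\delta)\times M$.

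\emph{Step 2: propagation to the whole of $X$.} On $(\delta/2,1)\times M$ the operator $\triangle_g^{\textrm{sep}}+K_{\mathrm{far}}^*+\lambda^2$ is a second-order elliptic operator with smooth coefficients. Its principal part is the Laplacian of a smooth Riemannian metric, provided $K_{\mathrm{far}}$ inherits from Lemma~\ref{lem_normal_form} the structure of a (cut-off) metric perturbation and does not destroy ellipticity. This is where smallness of $\delta$ and the $O(x)$ bounds come in, or else one simply notes that $\triangle_g$ itself is elliptic away from $x=0$. Since $w$ vanishes on the open set $(\delta/2,\delta)\times M$ and $X$ is connected, the unique continuation theorem for elliptic operators with smooth (even Lipschitz) principal coefficients, via Carleman estimates (Aronszajn--Cordes), gives $w\equiv0$ on $X$. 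Hence $Z=0$.

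\emph{Main obstacle.} The main difficulty is twofold. The first issue is the rigorous identification of $\mathcal{A}_1^*$ on the anisotropic space $\mathcal{H}$: the inner products are not the standard energy ones, so the adjoint involves spectral weights. I would try to handle this by working with the pivot $L^2$ duality, checking that the Hautus test can be equivalently posed for $\mathcal{A}_1$ itself (as in~\cite{DO2018}), which yields exactly the eigenproblem above. The second issue is ensuring ellipticity of the perturbed operator on the transition region $\delta\le x\le2\delta$. I would handle this by choosing $\delta$ small so that the cut-off tangential second-order part $\chi x^\beta(\triangle_{G(x)}-\triangle_G)$ remains a small perturbation of $x^\beta\triangle_G$.
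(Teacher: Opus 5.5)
Your proposal is correct and is essentially the paper's proof: the paper likewise works, implicitly, with the eigenproblem of $\mathcal{A}_1$ itself ($\psi=i\lambda\phi$, $(\triangle_g^{\textrm{sep}}+K_{\mathrm{far}})\phi=\lambda^2\phi$). It kills every mode on the collar $(0,\delta)\times M$ by the Frobenius analysis (the Friedrichs condition excludes the singular branch, $\mathcal{C}=0$ the regular one) and then propagates $\phi\equiv0$ by elliptic unique continuation. Your concern about ellipticity on $\{\delta\le x\le 2\delta\}$ needs no smallness of $\delta$, since $\triangle_g^{\textrm{sep}}+K_{\mathrm{far}}=\chi\,\triangle_g^{\textrm{sep}}+(1-\chi)\,\triangle_g$ has principal symbol a convex combination of two positive-definite forms.
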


\begin{proof}
	Set $L_1=\triangle_g^{\textrm{sep}}+K_{\mathrm{far}}$. Because $K_{\mathrm{far}}$ is supported in $\{x\ge\delta\}$, we have $L_1=\triangle_g^{\textrm{sep}}$ on the collar $\{0<x<\delta\}$.
	
	Let $(\phi,\psi)\in\ker(\lambda I-\mathcal{A}_1^*)\cap\ker\mathcal{C}$. Then $\psi=i\lambda\,\phi$, $(\triangle_g^{\textrm{sep}}+K_{\mathrm{far}})\phi=\lambda^2\phi$ with the Friedrichs boundary condition at $x=0$, and $\mathcal{C}(\phi,i\lambda\phi)=0$.
	Let $(\psi_j)_{j\in\N}$ be an orthonormal basis of $L^2(M)$ consisting of eigenvectors of $\triangle_G$, i.e., $-\triangle_G \psi_j=\mu_j^2 \psi_j$ for any $j\in\N$. On $(0,\delta)\times M$ we expand $\phi(x,y)=\sum_{j\geq 0} u_j(x)\psi_j(y)$ and, since $K_{\mathrm{far}}=0$ on $(0,\delta)$, each $u_j$ satisfies
	\begin{equation}\label{eq:FHcompact-ode}
		-u_j''(x)+\frac{\nu^2-\frac{1}{4}}{x^2}\,u_j(x)+\mu_j^2\,x^\beta u_j(x)=\lambda^2 u_j(x)\qquad\forall x\in(0,\delta),
	\end{equation}
	with Friedrichs condition at $x=0$.
	Following the reminders done in Section \ref{sec:friedrichs}, $x=0$ is a regular singular point, its indicial roots are $r_\pm=\frac{1}{2}\pm\nu$. The Friedrichs condition excludes the singular branch $x^{\frac{1}{2}-\nu}$.
	Hence, for every $j\in\N$, $u_j(x)=A_jx^{\frac{1}{2}+\nu}(1+\mathrm{o}(1))$ as $x\rightarrow 0^+$ for some $A_j\in\C$.
	
	We obtain
	$$
	\mathcal{C}\big(u_j \psi_j,\, i\lambda u_j \psi_j\big)=c_{\beta,\nu}\,A_j\,\psi_j\in L^2(M).
	$$
	Summing in $j$ (the series is convergent in the collar),
	$$
	\mathcal{C}(\phi,i\lambda\phi)=c_{\beta,\nu}\,\sum_{j\ge0} A_j\,\psi_j.
	$$
	Since $\mathcal{C}(\phi,i\lambda\phi)=0$ and $\{\psi_j\}$ is orthonormal, we conclude
	$$
	A_j=0\quad\forall j.
	$$
	Finally, $u_j\equiv0$ on $(0,\delta)$ for all $j$, hence $\phi\equiv0$ on the collar $(0,\delta)\times M$.
	Since $L_1$ has smooth coefficients on $[\delta,1]\times M$, uniqueness for second-order elliptic equations (or, equivalently, ODE uniqueness mode-by-mode at $x=\delta$) yields $\phi\equiv0$ on $(0,1)\times M$.
	Thus $(\phi,\psi)=(0,0)$ and $\ker(\lambda I-\mathcal{A}_1^*)\cap\ker\mathcal{C}=\{0\}$.
\end{proof}

Second, we use the fact that exact observability is stable under \emph{small} bounded perturbations (see~\cite[Chapter 6, Section 6.3]{TW2009}). 
No additional Fattorini-Hautus check is required in this step; nevertheless, Lemma~\ref{lem:FH-near} hereafter shows that the Fattorini-Hautus property persists for $(\mathcal{A}_g^*,\mathcal{C})$.
We choose $\delta>0$ small enough such that
\begin{equation}\label{eq:small-bounded}
	\|\mathcal{B}_{\mathrm{near}}\|_{\mathcal{L}(D(\mathcal{A}_0),\mathcal{H})}\leq \varepsilon_T
	\qquad\text{for}\quad 
	\mathcal{B}_{\mathrm{near}}=\begin{pmatrix}0&0\\-K_{\mathrm{near}}&0\end{pmatrix},
\end{equation}
with $\varepsilon_T$ small enough to preserve exact observability on $[0,T]$. Combining \eqref{eq:obs-A1} and \eqref{eq:small-bounded} yields the desired observability estimate 
$$
\int_0^T\|\mathcal{C} e^{t\mathcal{A}_g}Z_0\|_{L^2(M)}^2\, dt \geq c_T''\,\|Z_0\|_{\mathcal{H}}^2\qquad (T>2/\kappa).
$$
with
$$
\mathcal{A}_g=\mathcal{A}_0+\mathcal{B}+\mathcal{B}_{\mathrm{near}}
=\begin{pmatrix}0&I\\-(\triangle_g^{\textrm{sep}}+K_{\mathrm{far}}+K_{\mathrm{near}})&0\end{pmatrix}
=\begin{pmatrix}0&I\\-\triangle_g&0\end{pmatrix}.
$$

\begin{lemma}[Fattorini--Hautus stability under small collar perturbations]\label{lem:FH-near}
	Let $\mathcal{A}_1$ be as above and set
    \begin{equation}
    \mathcal{A}_g=\begin{pmatrix}0&I\\-(\triangle_g^{\textrm{sep}}+K_{\mathrm{far}}+K_{\mathrm{near}})&0\end{pmatrix}
    \end{equation}
	where $K_{\mathrm{near}}$ is supported in $\{x\le 2\delta\}$ and its coefficients are $O(x)$ in $C^1(M)$ with $\|K_{\mathrm{near}}\|$ arbitrarily small (by shrinking $\delta$).
	Then, for every $\lambda\in\C$,
	$$
	\ker(\lambda I-\mathcal{A}_g^*)\cap\ker\mathcal{C} = \{0\}.
	$$
\end{lemma}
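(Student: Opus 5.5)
The plan is to reduce the Fattorini--Hautus property for $\mathcal{A}_g$ to a \emph{unique continuation from the singular boundary} statement for the stationary operator $\triangle_g-\lambda^2$, and to prove this statement by a weighted (Carleman-type) estimate in the collar, with $K_{\mathrm{near}}$ absorbed as a perturbation.

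\emph{Step 1: reduction to an eigenfunction with vanishing Cauchy data.} As in the proof of Lemma~\ref{lem:FH-compact}, take $(\phi,\psi)\in\ker(\lambda I-\mathcal{A}_g^*)\cap\ker\mathcal{C}$. Then $\psi$ is a multiple of $\phi$, and $\triangle_g\phi=\lambda^2\phi$ with the Friedrichs condition at $x=0$. Since $\phi$ lies in the Friedrichs domain, Lemma~\ref{lem_trace} gives
\[
\phi=x^{\frac12+\nu}A(y)+x^{\frac12+\nu+\rho}B(x,y),
\]
and $\mathcal{C}\phi=(\nu+\tfrac12)A$. The condition $\mathcal{C}\phi=0$ therefore forces $A=0$. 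This leaves $\phi=\mathrm{O}(x^{\frac12+\nu+\rho})$ in $L^2(M)$: both the singular branch (killed by the Friedrichs condition) and the leading regular coefficient vanish.

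\emph{Step 2: infinite-order vanishing.} On $(0,\delta)\times M$, write the equation using Lemma~\ref{lem_normal_form} as
\[
\Big(-\partial_x^2+\tfrac{\nu^2-1/4}{x^2}+x^\beta\triangle_G-\lambda^2\Big)\phi=-K_{\mathrm{near}}\phi,
\]
where the right-hand side has coefficients $\mathrm{O}(x)$. The indicial roots of the left-hand side are only $\frac12\pm\nu$. Solving the model Bessel-type problem with zero Cauchy data at $0$ by variation of parameters (Green kernel built from $x^{\frac12\pm\nu}$), a bound $\phi=\mathrm{O}(x^{s})$ with $s>\frac12+\nu$ is upgraded to $\phi=\mathrm{O}(x^{s+\rho'})$, with $\rho'=\min(1,\beta)>0$. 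The only delicate point is that the tangential parts of $K_{\mathrm{near}}$ cost derivatives; I would control these using the equation together with tangential elliptic regularity on $M$. Iterating, $\phi$ vanishes to infinite order at $x=0$.

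\emph{Step 3: unique continuation and conclusion.} To pass from infinite-order vanishing to $\phi\equiv0$ on the collar, I would prove a Carleman estimate with weight $x^{-\tau}$, $\tau\to\infty$, for the model operator on $(0,\delta)\times M$. The idea is to conjugate by $x^{-\tau}$ and use the Hardy/Bessel structure: the positive commutator gains $\tau^2x^{-2}$ in $L^2$ and $\tau x^{-1}$ on $\partial_x$. These gains absorb the $\mathrm{O}(x)$ zero- and first-order terms, and the tangential second-order block $x^\beta(\triangle_{G(x)}-\triangle_G)$, once $\delta$ is small. The infinite-order vanishing from Step~2 justifies the integrations by parts at $x=0$. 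Hence $\phi=0$ near $x=0$. For $x>0$, $\triangle_g$ is a smooth elliptic operator on the connected set $(0,1)\times M$, so Aronszajn's unique continuation theorem gives $\phi\equiv0$, and then $\psi=0$.

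The main obstacle is the Carleman step, specifically the mixed terms $\mathrm{O}(x)\partial_xD_y$ in $\mathcal{R}$. The model operator only controls tangential derivatives with weight $x^{\beta/2}D_y$, while $\partial_x$ is controlled with weight $\tau x^{-1}$. Absorbing $x\,\partial_xD_y$ thus requires $x\cdot x^{-\beta/2}$ to be bounded, which holds for $\beta\le2$ but not automatically for $\beta>2$. If this fails, my first fallback is a change of the normal variable (or a refined normal form, as in the ``harmless redefinition of $x$'' in Step~3 of Lemma~\ref{lem_normal_form}) that removes the mixed terms to higher order in $x$. Failing that, I would use an anisotropic weight $e^{\tau\varphi(x)}$ adapted to the scaling $x^{\beta/2}D_y\sim\partial_x$.
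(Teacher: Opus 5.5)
Your plan is sound, and it takes a genuinely different and more complete route than the paper. The paper's proof is essentially your Step~1. The $O(x)$ terms do not alter the indicial roots, so $\mathcal{C}(\phi,i\lambda\phi)=0$ forces $A\equiv0$, and the paper then concludes $\phi\equiv0$ ``by the same elliptic uniqueness in a collar'' as in Lemma~\ref{lem:FH-compact}. In that lemma, however, the passage from $A=0$ to $\phi\equiv0$ on $(0,\delta)\times M$ was mode-by-mode ODE uniqueness, which relies on the collar operator being diagonal in $(\psi_j)$. Once $K_{\mathrm{near}}$ couples the tangential modes, this becomes a genuine unique continuation problem for vanishing Cauchy data (Friedrichs condition plus $A=0$) at the degenerate boundary. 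Your Steps~2--3 are what actually supply it: Frobenius iteration to infinite-order vanishing, a Carleman estimate with weight $x^{-\tau}$, then Aronszajn away from $x=0$. The paper's one-line argument is literally valid only in the decoupled setting. Yours handles the coupled operator, at the price of a weighted Carleman estimate and of tangential regularity up to $x=0$. The latter is obtainable by commuting tangential vector fields with $\triangle_g$ and using the Friedrichs form, which controls the resulting $x^\beta$-weighted tangential commutators.

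The obstacle you flag for $\beta>2$ does not occur, because there are no $\partial_xD_y$ terms. Since $g=dx^2+x^{-\beta}g_1(x)$ is block-diagonal, $\widetilde\triangle_g=-\partial_x^2-b\,\partial_x+x^\beta\triangle_{G(x)}$ exactly, with $b=\partial_x\log|\det g|^{1/2}$. Conjugation by the multiplication operator $m$ leaves the principal symbol $\xi^2+x^\beta G^{ij}(x,y)\eta_i\eta_j$ unchanged. It only adds two things: the first-order term $2\langle\nabla\log m,\nabla f\rangle_g$, whose normal part cancels $-b\,\partial_x f$ and whose tangential part has coefficients $x^\beta O(x)$; and a potential $C_\beta x^{-2}+O(x^{-1})$. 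Mixed terms with $O(x)$ coefficients would even contradict ellipticity near $x=0$ when $\beta>2$, since $x^2\gg x^\beta$ there, so the list in Lemma~\ref{lem_normal_form} is over-generous. Your first fallback is therefore automatic. The $O(x^{-1})$ potential is harmless both for your Frobenius gain $\rho'\leq1$ and in the Carleman estimate.

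One correction to the Carleman sketch: $x^\beta(\triangle_{G(x)}-\triangle_G)$ is second order and cannot be absorbed by first-order gains. Keep all of $x^\beta\triangle_{G(x)}$ in the principal part instead. Its commutator with the antisymmetric part $-2\tau x^{-1}\partial_x+\tau x^{-2}$ of the conjugated operator is $2\tau x^{\beta-2}\big(\beta\triangle_{G(x)}+x\,\partial_x\triangle_{G(x)}\big)$, which is nonnegative modulo lower-order terms for small $x$.
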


\begin{proof}
	Near $x=0$, the principal part of $\triangle_g^{\textrm{sep}}+K_{\mathrm{near}}$ is still the Bessel operator $-\partial_x^2+\frac{C_\beta}{x^2}$; the additional terms are $O(x)$ (in $C^1(M)$) and do not change the indicial roots.
	Thus any eigenfunction $\phi$ of $\triangle_g^{\textrm{sep}}+K_{\mathrm{far}}+K_{\mathrm{near}}$ has the same leading Frobenius term $A(y)\,x^{\frac{1}{2}+\nu}$, and the normal trace is still $\mathcal{C}(\phi,i\lambda\phi)=\tilde c_{\beta,\nu}\,A(\cdot)$ with $\tilde c_{\beta,\nu}\neq0$.
	Hence $\mathcal{C}(\phi,i\lambda\phi)=0$ implies $A\equiv0$ and then $\phi\equiv0$ by the same elliptic uniqueness in a collar.
\end{proof}

\medskip

Let us finally prove the sharpness of the time threshold.
Consider quasimodes microlocally concentrated in a thin collar $\{x\leq \delta\}$ and oscillating in the normal variable. The dynamics in this region is governed by the Bessel part $-\partial_x^2+\frac{C_\beta}{x^2}$ up to $O(x)$ tangential couplings, by Lemma~\ref{lem_normal_form}. The associated boundary frequencies have an asymptotic gap $\kappa\pi$, hence Beurling upper density $D^+=1/(\kappa\pi)$. Hence the observability inequality fails if $T\leq 2\pi D^+=2/\kappa$, which shows the optimality of $T_\ast=\beta+2$. 

Theorem~\ref{thm:obs-inequality} is proved.

\section{Consequences and localized observations}
\label{sec:conclusions}

This section records the dual controllability consequence of Theorem~\ref{thm:obs-inequality} and then discusses what remains true when the observation is restricted to a proper subset of the boundary.

\subsection{Dual viewpoint: controllability}
\label{sec:controllability}

Let $\mathsf H = L^2((0,1) \times M, dx \, dv_G(y))$. The Friedrichs realization of $\triangle_g$ (with either the Dirichlet or the Neumann boundary condition at $x = 1$) is a nonnegative self-adjoint operator on $\mathsf H$. We define
$$
\mathcal H = D(\triangle_g^{1/2}) \times \mathsf H
$$
endowed with the norm
$$
\Vert (u_0,u_1) \Vert_{\mathcal H}^2 = \Vert \triangle_g^{1/2} u_0 \Vert_{\mathsf H}^2 + \Vert u_1 \Vert_{\mathsf H}^2.
$$
By Lemma~\ref{lem_normal_form} and the discussion after Theorem~\ref{thm:obs-inequality}, this norm is equivalent to $\mathcal E_\nu[u_0,u_1]$.
Set $U = L^2(M,dv_G)$.
The wave generator is
$$
\mathcal A : D(\mathcal A) \subset \mathcal H \to \mathcal H,
\qquad
\mathcal A = \begin{pmatrix} 0 & \mathrm{id} \\ -\triangle_g & 0 \end{pmatrix},
\qquad
D(\mathcal A) = D(\triangle_g) \times D(\triangle_g^{1/2}),
$$
and the observation operator is
$$
\mathcal C : D(\mathcal A) \to U,
\qquad
\mathcal C \binom{u}{v} = \lim_{x\to 0^+} x^{\frac{1}{2}-\nu} \partial_x u(x,\cdot) \quad \text{in } L^2(M,dv_G).
$$
Let $\mathcal H_1 = D(\mathcal A)$ endowed with the graph norm, and let $\mathcal H_{-1} = \mathcal H_1'$ be its dual with respect to the pivot space $\mathcal H$.
By duality, the control operator $B = \mathcal C^* : U \to \mathcal H_{-1}$ is given by
\begin{equation}
\label{control-operator}
\langle B g,(z,w) \rangle_{\mathcal H_{-1},\mathcal H_1}
= \left\langle g,\mathcal C \binom{z}{w} \right\rangle_U
= \int_M g(y) \Big( \lim_{x\to 0^+} x^{\frac{1}{2}-\nu} \partial_x z(x,y) \Big) dv_G(y).
\end{equation}
The corresponding control system is
\begin{equation}
\label{abstract-ctrl}
\frac{d}{dt} \binom{u}{v}
= \mathcal A \binom{u}{v} + B g(t), 
\qquad
\binom{u(0)}{v(0)} = \binom{u_0}{v_0} \in \mathcal H.
\end{equation}

\begin{corollary}
\label{cor:exact-ctrl}
Assume $T > T_\ast = \beta + 2$.
Then the control system \eqref{abstract-ctrl} is exactly controllable in time $T$: for every initial state $(u_0,v_0) \in \mathcal H$ and every target state $(u_T,v_T) \in \mathcal H$, there exists a control $g \in L^2(0,T;U)$ such that the corresponding mild solution satisfies $(u(T),v(T)) = (u_T,v_T)$.

Moreover, one can choose $g$ so that
$$
\Vert g \Vert_{L^2(0,T;U)}
\leq C_T \Big( \Vert (u_0,v_0) \Vert_{\mathcal H} + \Vert (u_T,v_T) \Vert_{\mathcal H} \Big).
$$
\end{corollary}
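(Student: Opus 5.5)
The plan is to apply the Hilbert Uniqueness Method in its abstract form: exact controllability of $(\mathcal A,B)$ in time $T$ is equivalent to exact observability of the adjoint pair $(\mathcal A^*,B^*)=(\mathcal A^*,\mathcal C)$ on $[0,T]$ (see \cite{TW2009,Trelat2024}). The argument has three steps, carried out in this order.

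\emph{Step 1 (reduction to the observed wave equation).} Since $\triangle_g$ is self-adjoint and nonnegative on $\mathsf H$, the operator $\mathcal A$ is skew-adjoint on $\mathcal H$, possibly after modding out the kernel of $\triangle_g$ in the Neumann case at $x=1$, or after adding a harmless lower-order shift. Hence $e^{t\mathcal A^*}=e^{-t\mathcal A}$. Reversing time, observability of $(\mathcal A^*,\mathcal C)$ is the same as the inequality
\[
\int_0^T\|\mathcal C e^{t\mathcal A}Z_0\|_U^2\,dt\geq c_T\|Z_0\|_{\mathcal H}^2 .
\]
This is exactly the lower bound in \eqref{main_obs}, provided the norm of $\mathcal H$ is equivalent to $\mathcal E_\nu$.

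\emph{Step 2 (identifying the norms).} This step is the main obstacle. One must check that $\|\triangle_g^{1/2}u_0\|_{\mathsf H}^2+\|u_1\|_{\mathsf H}^2\asymp\mathcal E_\nu[u_0,u_1]$, with $\mathcal C$ defined on the same state space. The paper asserts this after Theorem~\ref{thm:obs-inequality}. If the scales turn out to differ by a fixed power, the fix is to realise the state space as $\mathcal H_\nu:=\{(u_0,u_1):\mathcal E_\nu[u_0,u_1]<\infty\}$ and run the duality there, with pivot $\mathcal H_\nu$. Nothing below depends on the specific norm, only on the Hilbert structure and on the skew-adjointness of $\mathcal A$ on it. The latter holds because $\mathcal E_\nu$ is defined spectrally through the commuting operators $P_\omega$ and $\triangle_G$ in the separable case, and by perturbation otherwise. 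In the separable case I would verify the equivalence modewise, using the energy identification in Section~\ref{sec_proof_main_thm_separable}.

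\emph{Step 3 (admissibility and the HUM construction).} The upper bound in \eqref{main_obs} says that $\mathcal C$ is admissible, so $B=\mathcal C^*$ is an admissible control operator. The input map $\Phi_T g=\int_0^T e^{(T-s)\mathcal A}Bg(s)\,ds$ is then bounded from $L^2(0,T;U)$ to $\mathcal H$. Its adjoint is $\Phi_T^*Z=\mathcal C e^{(T-\cdot)\mathcal A^*}Z$, and the lower bound gives $\|\Phi_T^*Z\|\geq\sqrt{c_T}\|Z\|_{\mathcal H}$. Hence $\Phi_T$ is onto with a bounded right inverse, namely $\Phi_T^*(\Phi_T\Phi_T^*)^{-1}$, whose norm is at most $c_T^{-1/2}$. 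Given initial and target states, choose $g=\Phi_T^*(\Phi_T\Phi_T^*)^{-1}\big((u_T,v_T)-e^{T\mathcal A}(u_0,v_0)\big)$. The group $e^{T\mathcal A}$ is unitary, so the control bound follows with constant $c_T^{-1/2}$. The condition $T>\beta+2$ enters only through Theorem~\ref{thm:obs-inequality}.
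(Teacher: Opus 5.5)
Your route is the paper's own: its proof is a one-line appeal to HUM. That proof rests on the equivalence $\|\triangle_g^{1/2}u_0\|_{\mathsf H}^2+\|u_1\|_{\mathsf H}^2\asymp\mathcal E_\nu[u_0,u_1]$ asserted just before the corollary. The step you flag as the main obstacle genuinely fails, and your Step~3 fails with it.

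Here $\nu-\frac12=\frac{\beta n}{4}>0$, and this is not a harmless change of scale. Work in the separable case with Dirichlet at $x=1$. Take $\psi_1$ constant ($\omega_1=0$) and the normalized eigenfunctions $\varphi_n=\sqrt2\,x^{1/2}J_\nu(j_{\nu,n}x)/|J_{\nu+1}(j_{\nu,n})|$ of $P_0$. The small-$x$ expansion of $J_\nu$ together with $|J_{\nu+1}(j_{\nu,n})|\sim(2/(\pi j_{\nu,n}))^{1/2}$ gives $|\mathcal C\varphi_n|\sim j_{\nu,n}^{\nu+1/2}$. So $Z_n=(0,\varphi_n\psi_1)$ has $\|Z_n\|_{\mathcal H}=1$, while $\int_0^T\|\mathcal Ce^{t\mathcal A}Z_n\|_U^2\,dt\sim j_{\nu,n}^{2\nu-1}\to\infty$.

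Consequently:
\begin{itemize}
\item $\mathcal C$ is not admissible on $\mathcal H=D(\triangle_g^{1/2})\times\mathsf H$, and $B=\mathcal C^*$ is not an admissible control operator there.
\item $\Phi_T$ does not map $L^2(0,T;U)$ into $\mathcal H$.
\item The upper bound in \eqref{main_obs} gives admissibility for the $\mathcal E_\nu$-norm only.
\item For $\nu>\frac32$ the same computation shows $\mathcal C$ is not even bounded on $D(\mathcal A)$, so $B:U\to\mathcal H_{-1}$ is not well defined as set up.
\end{itemize}

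Your fallback does not repair this, because the adjoint depends on the pivot. HUM in $\mathcal H_\nu$ with pivot $\mathcal H_\nu$ gives controllability for $\mathcal C^{*_\nu}=\Lambda^{-1}B$, where $\Lambda:\mathcal H_\nu\to\mathcal H_\nu'$ is the Riesz map relative to $\mathcal H$. That is a different system from \eqref{abstract-ctrl}.

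What does work is to keep the pivot $\mathcal H$ and use the triple $\mathcal H_\nu\hookrightarrow\mathcal H\hookrightarrow\mathcal H_\nu'$, with the group restricted to $\mathcal H_\nu$ and extended by duality to $\mathcal H_\nu'$. Theorem~\ref{thm:obs-inequality} says precisely that $\Phi_T^*:\mathcal H_\nu\to L^2(0,T;U)$ is bounded and bounded below. Hence $\Phi_T:L^2(0,T;U)\to\mathcal H_\nu'$ is bounded and onto, with a right inverse of norm at most $c_T^{-1/2}$. Your formula for $g$ then works verbatim in $\mathcal H_\nu'$. Since $\mathcal H\hookrightarrow\mathcal H_\nu'$, it yields the endpoint conditions and the cost bound of the corollary for data in $\mathcal H$.

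The price is that, for general $g\in L^2(0,T;U)$, the mild solution lives in $\mathcal H_\nu'\supsetneq\mathcal H$. The system is well posed and exactly controllable in $\mathcal H_\nu'$, not in $\mathcal H$, and the corollary has to be read (or restated) in that scale.
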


\begin{proof}
This is the standard Hilbert Uniqueness Method consequence of the observability estimate \eqref{main_obs} for the conservative system generated by $\mathcal A$.
\end{proof}

\subsection{Localized observations}
\label{sec:localized-obs-band-limited}

If we only observe on a fixed open subset $\omega\subsetneq M$, the observability inequality \eqref{main_obs}, namely,
$$
c_T \mathcal{E}_\nu[u_0,u_1] \leq \int_0^T \int_{\omega} \left| \lim_{x\to 0^+}x^{\frac{1}{2}-\nu}\partial_x u(x,y,t)\right|^2 \,dv_G(y)\,dt
$$
with a constant $c_T>0$ independent of the initial data, fails in general, even in the separable model on a sphere. This is due to high angular modes that may concentrate their mass away from $\omega$, so the boundary signal on $\omega$ can be arbitrarily small while the energy stays of the order of $1$.

\medskip

More precisely, let $(\psi_k)_{k\geq1}$ be an eigenbasis of $\triangle_G$ on $(M,G)$ with eigenvalues $\omega_k$. Let us assume that:
\begin{quote}
	\it
	There exists a sequence of indices $k_j$ such that $\|\psi_{k_j}\|_{L^2(\omega)}\to 0$.
\end{quote}
This is an assumption on $M$, $\omega$ and on the eigenbasis if eigenvalues are multiple. 
This assumption is related to quantum ergodicity properties, i.e., to localization properties of highfrequency eigenfunctions.
When $M=\mathbb{S}^n$ is the round sphere this assumption is satisfied: take Gaussian beams or highest-weight harmonics concentrating away from $\omega$. 

Under this assumption, the argument is then standard: take initial data supported on the single mode $k_j$, namely, $u_0(x,y)=f_0(x)\psi_{k_j}(y)$, $u_1(x,y)=f_1(x)\psi_{k_j}(y)$. The dynamics decouple in $k$, hence
$$
\lim_{x\to 0^+}x^{\frac{1}{2}-\nu}\partial_x u(x,y,t) = c_{\beta,\nu}\,\big(\text{1D time signal}\big)\,\psi_{k_j}(y).
$$
By Ingham (as in Theorem \ref{thm:obs-inequality}), for $T>T_\ast$ the time integral of the squared 1D signal is
uniformly comparable to the modal energy; thus 
$$ 
\int_0^T \int_{\omega}\Big| \cdots \Big|^2 \asymp \|\psi_{k_j}\|_{L^2(\omega)}^2 \times \text{(modal energy)}  \xrightarrow[j\to+\infty]{} 0,
$$
while $\mathcal{E}_\nu[u_0,u_1]$ stays bounded below. Hence no uniform lower bound $c_T>0$ can hold.

The physical interpretation is the following: an observation localized at $\omega$ cannot capture very fine angular modes that avoid it; a uniform lower bound would force a uniform $\inf_k\|\psi_k\|_{L^2(\omega)}>0$, which is false.

\medskip

Let us give a possible ``remedy" by establishing a quantified observability inequality for spectrally truncated initial data.

\paragraph{Spectral cutoff.}
Let $\omega\subset M$ be a nonempty open set; assume that we perform the observation only on $\omega$.
We can derive a sharp positive statement with a spectral cutoff in the tangential variable: if we restrict the initial data to a fixed angular bandwidth (i.e., only spherical harmonics up to a certain degree), then localized observability does hold, with a cost that deteriorates like $e^{C\sqrt{\Lambda}}$ where $\Lambda$ is the tangential spectral radius.
This is an easy application of the Lebeau-Robbiano spectral inequality, as explained hereafter.

Let $(\psi_k)_{k\geq1}$ be an eigenbasis of $\triangle_G$ on $(M,G)$ with eigenvalues $\omega_k$. For any $\Lambda>0$, set
\begin{equation}
\label{eqn:truncated-eigenfunctions}
E_\Lambda
= 
\mathrm{Span} \{\psi_k\ \mid\  \omega_k\leq \Lambda\}.
\end{equation}
According to the Lebeau-Robbiano spectral inequality (see~\cite{LR1995}) on compact manifolds, there exists a constant $C(M,\omega)>0$ such that
\begin{equation}\label{LebeauRobbiano}
	\|f\|_{L^2(M)}^2 \leq C(M,\omega) e^{C(M,\omega)\sqrt{\Lambda}} \|f\|_{L^2(\omega)}^2 \qquad\forall f\in E_\Lambda.
\end{equation}
We infer the following result.

\begin{theorem}[Quantitative localized observability for band-limited tangential spectrum]\label{thm:local-bandlimited}
	Assume that the initial data $(u_0,u_1)$ are tangentially band-limited: $u_j \in H_x^{\nu-1/2+j}\,E_\Lambda$, for $j=0,1$.
	Then, for every $T>T_\ast=\beta+2$, 
	\begin{equation}\label{eq:local-obs-band}
		C(M,\omega)^{-1} e^{-C(M,\omega)\sqrt{\Lambda}}\, c_T\,\mathcal{E}_\nu[u_0,u_1]
		\leq 
		\int_0^T\!\!\int_{\omega} \Big| \lim_{x\to 0^+}x^{\frac{1}{2}-\nu}\partial_x u(x,y,t)\Big|^2\,dv_G(y)\,dt
		\leq
		C_T\,\mathcal{E}_\nu[u_0,u_1] 
	\end{equation}
	where $c_T$ and $C_T$ are the full-boundary constants in Theorem~\ref{thm:obs-inequality}.
\end{theorem}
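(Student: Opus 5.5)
The upper bound is immediate: since $\omega\subset M$, the integral over $\omega$ is bounded by the integral over all of $M$. The right-hand inequality of \eqref{main_obs} in Theorem~\ref{thm:obs-inequality} then gives the bound $C_T\,\mathcal{E}_\nu[u_0,u_1]$. The content is the lower bound. The plan is to apply the Lebeau--Robbiano inequality \eqref{LebeauRobbiano} pointwise in time to the boundary trace, and then to invoke the full-boundary lower bound.

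First, I will check that the observation trace stays in $E_\Lambda$ for every $t$. In the separable case the dynamics decouple in the tangential modes: by the modal reduction \eqref{eq:modal-wave}, $u(t)=\sum_{\omega_k\le\Lambda}u_k(t,x)\psi_k(y)$ for all $t$, since band-limited data produce band-limited solutions. By Lemma~\ref{lem_trace} and \eqref{eq:ren-time-trace}, the trace
\[
F(t,\cdot):=\lim_{x\to0^+}x^{\frac12-\nu}\partial_x u(x,\cdot,t)=\sum_{\omega_k\le\Lambda}\mathcal{C}_{\mathrm{conj}}u_k(t)\,\psi_k(\cdot)
\]
is then a finite linear combination of the $\psi_k$, so $F(t,\cdot)\in E_\Lambda$ for almost every $t$. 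The regularity $u_j\in H_x^{\nu-1/2+j}E_\Lambda$ corresponds, up to the indexing convention, to finite energy $\mathcal{E}_\nu$. By admissibility (the upper bound in \eqref{main_obs}), $F\in L^2(0,T;E_\Lambda)$.

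Second, I will apply \eqref{LebeauRobbiano} to $f=F(t,\cdot)$ for a.e.\ $t$ and integrate over $(0,T)$:
\[
\int_0^T\!\!\int_M|F|^2\,dv_G\,dt\le C(M,\omega)e^{C(M,\omega)\sqrt\Lambda}\int_0^T\!\!\int_\omega|F|^2\,dv_G\,dt .
\]
Combining this with the lower bound of Theorem~\ref{thm:obs-inequality}, $c_T\mathcal{E}_\nu\le\int_0^T\!\int_M|F|^2$, yields the left inequality in \eqref{eq:local-obs-band} for $T>T_\ast$, with exactly the stated constants.

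The main obstacle is the non-separable case. There $\triangle_g$ does not commute with the projection onto $E_\Lambda$: the perturbation $K$ of Lemma~\ref{lem_normal_form} couples tangential modes, so $F(t)$ need not stay in $E_\Lambda$. My first attempt would be to read the statement as concerning the separable model, where the argument above is complete. For the general metric, I would define $E_\Lambda$ through the spectral projector of the full operator restricted to the boundary trace, or bound the leakage $(I-\Pi_\Lambda)F$ by $O(\delta)$ using \eqref{eq:near-small}. That leakage bound would only give the estimate up to an error absorbable for small collar perturbations, so I would state clearly that the clean constant is obtained in the separable case.
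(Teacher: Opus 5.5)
Your argument matches the paper's proof: the trace stays in $E_\Lambda$ for each $t$ because the dynamics decouple in the tangential modes. You then apply the Lebeau--Robbiano inequality \eqref{LebeauRobbiano} pointwise in time, integrate, and conclude with the full-boundary bounds of Theorem~\ref{thm:obs-inequality} (using $\int_\omega\le\int_M$ for the upper bound), which gives exactly the stated constants. Your caveat about the non-separable case is well taken: the paper's proof also rests on ``the dynamics are separated in $k$,'' so it too is in effect an argument for the separable model.
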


\begin{proof}
	Thanks to the assumption on the initial data, for each fixed $t$, the observed boundary signal $f(\cdot,t) = \lim_{x\to 0^+}x^{\frac{1}{2}-\nu}\partial_x u(x,\cdot,t)$ belongs to $E_\Lambda$ because the dynamics are separated in $k$.
	Applying \eqref{LebeauRobbiano} to $f$ and integrating in time gives
	$$
	\int_0^T\!\!\int_{\omega} |f|^2 \geq C(M,\omega)^{-1}e^{-C(M,\omega)\sqrt{\Lambda}} \int_0^T \int_M |f|^2 .
	$$
	By Theorem~\ref{thm:obs-inequality} (full boundary case),
	$\int_0^T \int_M |f|^2 \asymp \mathcal{E}_\nu[u_0,u_1]$ for $T>T_\ast$.
	This gives the lower bound in \eqref{eq:local-obs-band}. The upper bound follows from $\int_\omega\leq\int_M$.
\end{proof}

\begin{remark}[Practical meaning on gas giants]
	On a planetary surface, a sensor measurement localized on $\omega$ can only capture a finite number of angular degrees.
	Theorem~\ref{thm:local-bandlimited} quantifies this: all spherical harmonics up to degree $\sim\sqrt{\Lambda}$ are observable in time $T>\beta+2$, with a stability constant of the order of $e^{C\sqrt{\Lambda}}$. High-degree modes cannot be uniformly observed from $\omega$ alone.
\end{remark}

\subsection{Time-varying localized observations}\label{sec:local-obs-time-varying}
The previous section shows that a fixed localized observation on a proper open subset $\omega \subsetneq M$ cannot yield a uniform observability inequality in general, even in the separable model on a sphere, unless one imposes a spectral cutoff in the tangential variable.
What makes moving observations effective is a three-step mechanism developed abstractly in the separate paper~\cite{dHKT2026}: first, exact convexification on a finite-dimensional boundary profile space; second, a dynamical switching realization of that static design on a genuine time interval; third, a Ces\`aro tail-reduction argument allowing one to concatenate larger and larger tangential spectral windows and recover the full energy in the large-time limit.
In the present paper we only record the specialization of that mechanism to the gas-giant geometry: one finite-band moving-design statement, one asymptotic consequence, and the associated sensing protocol.

Throughout this subsection, we assume that $M$ is diffeomorphic to the round sphere $\mathbb{S}^n$.
Let $d\sigma$ be the canonical volume form on $\mathbb{S}^n$, and let
$$
d\hat\sigma = \frac{d\sigma}{\int_{\mathbb{S}^n} d\sigma}.
$$
By Moser's trick, there exists a diffeomorphism $\Phi : \mathbb{S}^n \to M$ such that $\Phi_* d\hat\sigma = \frac{dv_G}{dv_G(M)}$.
Consequently, for every $R \in \mathrm{SO}(n+1)$, the map $\varphi_R = \Phi \circ R \circ \Phi^{-1}$ preserves $dv_G$.
Fix a nonempty open set $\omega \subset M$ such that $v_G(\omega) = L v_G(M)$ with $L \in (0,1)$.

At finite tangential bandwidth, the construction has two distinct steps.
First one builds, on the profile space $E_\Lambda$, an exact convexified design of moved copies of $\omega$.
Second one realizes that static design on a genuine time interval by a piecewise constant switching protocol.
The rigorous statement below is formulated at this switching level.
The simpler one-cycle schedule discussed at the end of the subsection should be understood only as a practical sensing-oriented implementation.

\begin{theorem}[Finite-band moving observation design]
\label{moving-band}
Fix $T_0 > T^* = \beta + 2$, $\Lambda > 0$, and $\varepsilon \in (0,L)$.
Then there exist an integer $J \geq 1$, rotations $R_1,\dots,R_J \in \mathrm{SO}(n+1)$, weights $\theta_1,\dots,\theta_J > 0$ with $\sum_{j=1}^J \theta_j = 1$, and a piecewise constant measurable observation set
$$
t \mapsto \omega_{\Lambda,\varepsilon}(t) \subset M, \qquad t \in [0,T_0),
$$
taking only the values $\omega_j = \varphi_{R_j}(\omega)$, for $j = 1,\dots,J$, and obtained by a switching realization of the convexified design $(R_j,\theta_j)_{1 \leq j \leq J}$ on a sufficiently fine partition of $[0,T_0)$, such that the $T_0$-periodic extension of $\omega_{\Lambda,\varepsilon}(t)$ satisfies
\begin{equation}\label{moving-band-ineq}
(L - \varepsilon)\, c_{T_0}\, \energy_\nu[u_0,u_1] \leq \frac{1}{m} \int_0^{m T_0} \int_{\omega_{\Lambda,\varepsilon}(t)} \Big\vert \lim_{x \to 0^+} x^{\frac{1}{2} - \nu} \partial_x u(x,y,t) \Big\vert^2 dv_G(y)\, dt \leq C_{T_0}\, \energy_\nu[u_0,u_1]
\end{equation}
for every $m \in \mathbb{N}$ and every solution $u$ whose initial data satisfy $u_0(x,\cdot), u_1(x,\cdot) \in E_\Lambda$ for almost every $x \in (0,1)$.
\end{theorem}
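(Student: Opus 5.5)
The plan is to treat this as the separable, tangentially band-limited setting, where the boundary signal $f(t)=\lim_{x\to0^+}x^{\frac12-\nu}\partial_xu(x,\cdot,t)$ stays in $E_\Lambda$ for every $t$. This is exactly as in the proof of Theorem~\ref{thm:local-bandlimited}, and it is what the statement implicitly requires, since otherwise the modes do not decouple. The argument then reduces to two ingredients: an exact static averaging identity on $E_\Lambda$, and a switching (homogenization in time) step. The upper bound in \eqref{moving-band-ineq} is immediate: $\int_{\omega(t)}\le\int_M$, the signal is $T_0$-periodic in the sense that energy is conserved, and each window $[lT_0,(l+1)T_0)$ contributes at most $C_{T_0}\energy_\nu$ by Theorem~\ref{thm:obs-inequality}.

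\emph{Step 1 (exact convexification).} For each $R\in\mathrm{SO}(n+1)$, consider the Hermitian form $Q_R(f)=\int_{\varphi_R(\omega)}|f|^2\,dv_G$ on the finite-dimensional space $E_\Lambda$. Averaging over Haar measure $dR$ and applying Fubini gives
\[
\int_{\mathrm{SO}(n+1)}\mathbf 1_{\varphi_R(\omega)}(y)\,dR=\hat\sigma\big(\Phi^{-1}(\omega)\big)=L
\qquad\text{for every } y\in M.
\]
This uses transitivity of $\mathrm{SO}(n+1)$ on $\mathbb S^n$ and $\Phi_*d\hat\sigma=dv_G/v_G(M)$. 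Hence $\int Q_R\,dR=L\,\|\cdot\|_{L^2(M)}^2$ on $E_\Lambda$. This average lies in the closed convex hull of the compact set $\{Q_R\}$ inside the finite-dimensional space of Hermitian forms on $E_\Lambda$, so by Carathéodory it equals $\sum_{j=1}^J\theta_jQ_{R_j}$ exactly, with $J\le(\dim E_\Lambda)^2+1$. Inserting $f(t)$ and using the full-boundary inequality of Theorem~\ref{thm:obs-inequality} on each window of length $T_0$ then gives
\[
\sum_j\theta_j\int_{lT_0}^{(l+1)T_0}Q_{R_j}(f(t))\,dt\ \ge\ L\,c_{T_0}\,\energy_\nu
\quad\text{for every } l.
\]

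\emph{Step 2 (switching realization).} Partition $[0,T_0)$ into $N$ cells of length $h=T_0/N$, split each cell into consecutive subintervals of lengths $\theta_jh$, and set $\omega(t)=\omega_j$ on the $j$-th piece; then extend $T_0$-periodically. It remains to show, uniformly over data with $\energy_\nu=1$ and in $E_\Lambda$, that
\[
\Big|\int_0^{T_0}\sum_j\big(\mathbf 1_{I_j^h}(t)-\theta_j\big)Q_{R_j}(f(t))\,dt\Big|\le\varepsilon\,c_{T_0}
\]
for $N$ large. By conservation of energy and periodicity, this estimate on $[0,T_0)$ suffices for every window, and averaging over $m$ windows then yields \eqref{moving-band-ineq}. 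Expand $f(t)=\sum_{k\le\Lambda}\sum_{n\in\Z}a_{n,k}e^{i\tilde\mu_n(\omega_k)t}\psi_k$. Because the number of tangential modes is finite, I split $f$ into a low-normal-frequency part $|\tilde\mu_n|\le K$ and a high part.
\begin{itemize}
\item Low part: it ranges in a finite-dimensional space of trigonometric polynomials, which is compact in $C([0,T_0])$ on the unit sphere. Since $\mathbf 1_{I_j^h}-\theta_j\rightharpoonup0$ weakly-$*$ as $h\to0$, the error is uniformly small.
\item Cross and high terms: these are the main obstacle, because the weights $\mathbf 1_{I_j^h}-\theta_j$ are not small and the set of data is not compact.
\end{itemize}

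For the cross and high terms, I would first write the periodic weight as a Fourier series $\sum_{m\ne0}w_me^{2\pi imt/h}$. Each product of exponentials then becomes a shifted exponential, and a Hilbert-type inequality for the uniformly separated frequency set $\{\tilde\mu_n(\omega_k)\}$ (finitely many $k$, Ingham gap $\gamma$) should bound the resulting bilinear sums. Only frequency pairs with $|\tilde\mu_n-\tilde\mu_{n'}|\approx2\pi m/h$ resonate, and these pair high modes with high modes. The energy weights $\lambda_n^{\nu+\frac12}$ compared with the time-trace coefficients $|a_{n,k}|^2$ should make the high tail contribute little as $K\to\infty$ relative to $\energy_\nu$. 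If this weighted-tail argument fails, the fallback is to choose $\varepsilon$-dependent $K$ and $h$ jointly, and to use the slack $\varepsilon$ only on the lower bound: positivity of $Q_{R_j}$ lets one discard the high-high error from below after a Cauchy--Schwarz absorption.
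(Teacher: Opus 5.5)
Your restriction to the separable case, the upper bound and Step~1 are fine. Step~1 (Haar average of $\mathbf{1}_{\varphi_R(\omega)}$, Fubini, Carath\'eodory in the real vector space of Hermitian forms on $E_\Lambda$) is in fact a self-contained proof of the exact convexification that the paper only imports from \cite{dHKT2026}. The gap is Step~2, and no sharper estimate will fill it: the uniform bound you aim for is false.

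To see this, fix $g=\sum_k c_k\psi_k\in E_\Lambda$ and take data whose $k$-th trace is $c_k\sum_n\chi\big((n-N)/W\big)e^{i\mu_n(\omega_k)(t-t_0)}$, with $\chi$ a fixed bump and $1\ll W\ll N$. Every finitely supported trace sequence comes from band-limited data. By min--max, $0\leq\mu_n(\omega)-\mu_n(0)\lesssim\omega/\mu_n(0)$. Moreover $\mu_n(0)$ is asymptotically arithmetic with the step $2\pi/T_\ast$ of Remark~\ref{rem:upper-density}. Hence $f(t,y)=s(t)g(y)$ up to a relatively negligible error, where $|s|^2$ concentrates in $O(1/W)$-neighbourhoods of $t_0+T_\ast\mathbb{Z}$: a normal wave packet hitting $x=0$ once per round trip. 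Take $T_\ast<T_0<2T_\ast$ and $t_0\in(T_0-T_\ast,T_\ast)$ interior to a piece on which $\omega(t)=\omega_{j_0}$; for an arbitrary measurable schedule, a Lebesgue density point will do. There is then a single arrival in $[0,T_0)$, and your error term equals, to leading order,
\[
\Big(\int_{\omega_{j_0}}|g|^2\,dv_G-L\,\|g\|_{L^2(M)}^2\Big)\,\|s\|_{L^2(0,T_0)}^2 .
\]
Set $\lambda_R:=\min\{Q_R(g):\ g\in E_\Lambda,\ \|g\|_{L^2(M)}=1\}$ and $\lambda_j:=\lambda_{R_j}$, and take $g$ a minimizer for $R_{j_0}$. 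The modulus of the error is then essentially $(L-\lambda_{j_0})\int_0^{T_0}\!\int_M|f|^2\geq(L-\lambda_{j_0})\,c_{T_0}\,\mathcal{E}_\nu$, independently of $h$. Now take the round sphere, $\omega$ a geodesic cap centred at $e$, and $\Lambda\geq n$. Testing $g=1-\tau\,y\cdot Re$ with small $\tau>0$ gives $\lambda_R<L$ for every $R$, so $\max_R\lambda_R<L$ and Step~2 fails for all small $\varepsilon$.

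Your proposed repairs do not help either:
\begin{itemize}
\item The union $\{\tilde\mu_n(\omega_k)\}$ is not uniformly separated: frequencies of different $k$ coalesce as $n\to\infty$, and coincide when $\omega_k$ is repeated.
\item There is no extra decay of the high tail relative to $\mathcal{E}_\nu$, since $\sum_n|a_{n,k}|^2$ is uniformly equivalent to the modal energy.
\item Discarding the high part from below yields nothing for data which, like these, are entirely high-frequency.
\end{itemize}

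Your argument does close if the class is further cut off in the normal index, $|n|\leq K$. The normalized traces then form a compact subset of $C([0,T_0];E_\Lambda)$, and your weak-$*$ argument for the low part gives Step~2 with $N=N(K,\varepsilon)$.

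Note that the paper does not supply the missing step either. Its proof asserts, by appeal to the abstract switching realization of \cite{dHKT2026}, the one-period bound $\int_0^{T_0}\!\int_{\omega(t)}|f|^2\geq(L-\varepsilon)\int_0^{T_0}\!\int_M|f|^2$ for the whole tangentially band-limited class. Take $T_\ast<T_0<2T_\ast$ and a schedule fine enough that every $\omega_j$ occupies positive time in $(T_0-T_\ast,T_\ast)$. The packet example then shows this bound would force $\lambda_j\geq L-\varepsilon$ for each individual $\omega_j$, in which case no motion would be needed at all. So if \eqref{moving-band-ineq} holds for small $\varepsilon$, it can only be through slack between $c_{T_0}\mathcal{E}_\nu$ and $\int_0^{T_0}\!\int_M|f|^2$, not through the switching mechanism.
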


\begin{proof}
Fix the tangential cutoff $\Lambda$. In the gas-giant setting considered here, the boundary traces associated with data satisfying $u_0(x,\cdot),u_1(x,\cdot) \in E_\Lambda$ for a.e. $x \in (0,1)$ span a finite-dimensional profile space on $M$. The abstract result of~\cite{dHKT2026} first applies exact convexification to that finite-dimensional trace space and to the prototype subset $\omega$, producing finitely many moved copies $\omega_j = \varphi_{R_j}(\omega)$ together with weights $\theta_j$ whose convex combination reproduces the full-boundary average exactly on that trace space. The same paper then realizes this static design on $[0,T_0)$ by a piecewise constant switching protocol on a sufficiently fine micro-partition of the period.

For the resulting moving set $t \mapsto \omega_{\Lambda,\varepsilon}(t)$, the abstract switching realization yields
$$
\int_0^{T_0} \int_{\omega_{\Lambda,\varepsilon}(t)} \Big\vert \lim_{x \to 0^+} x^{\frac{1}{2} - \nu} \partial_x u(x,y,t) \Big\vert^2 dv_G(y)\, dt \geq (L - \varepsilon)
\int_0^{T_0} \int_M \Big\vert \lim_{x \to 0^+} x^{\frac{1}{2} - \nu} \partial_x u(x,y,t) \Big\vert^2 dv_G(y)\, dt
$$
for every solution in the tangentially band-limited class under consideration. Combining this with the full-boundary observability estimate proved earlier in the paper gives the lower bound in \eqref{moving-band-ineq} on one period. The upper bound follows immediately from the full-boundary upper estimate. Finally, repeating the same switching protocol periodically on successive intervals of length $T_0$ gives \eqref{moving-band-ineq} for every $m \in \mathbb{N}$.
\end{proof}

\begin{remark}
Theorem \ref{moving-band} does not insist on a single coarse decomposition $[0,T_0) = I_1 \cup \cdots \cup I_J $ with one visit to each position.
The rigorous realization may involve many short switchings inside the period.
What matters is the convexified design and the empirical time fractions $\theta_j$, not the existence of one unique cycle.
The one-cycle schedule described below is merely a simplified sensing protocol.
\end{remark}

\begin{theorem}[Asymptotic recovery by moving observations]
\label{asymptotic-moving}
Fix $T_0 > T^* = \beta + 2$.
Then there exists a piecewise constant observation set $t \mapsto \omega(t)$, each value of which is obtained from $\omega$ by a volume-preserving diffeomorphism of $(M,G)$, constructed by concatenating on the $m$-th block $[(m - 1)T_0,mT_0)$ a switching realization of a finite-band convexified design with parameters $(\Lambda_m,\varepsilon_m)$, such that
\begin{equation}\label{asymptotic-moving-ineq}
\liminf_{N \to +\infty} \frac{1}{N} \int_0^{N T_0} \int_{\omega(t)} \Big\vert \lim_{x \to 0^+} x^{\frac{1}{2} - \nu} \partial_x u(x,y,t) \Big\vert^2 dv_G(y)\, dt \geq L\, c_{T_0}\, \energy_\nu[u_0,u_1]
\end{equation}
for all initial data satisfying $\energy_\nu[u_0,u_1] < +\infty$.

Equivalently, for every initial datum $(u_0,u_1)$ with $\energy_\nu[u_0,u_1] < +\infty$ and every $\delta > 0$ there exists $N_\delta(u_0,u_1) \in \mathbb{N}$ such that
\begin{equation}\label{asymptotic-moving-quant}
\frac{1}{N} \int_0^{N T_0} \int_{\omega(t)} \Big\vert \lim_{x \to 0^+} x^{\frac{1}{2} - \nu} \partial_x u(x,y,t) \Big\vert^2 dv_G(y)\, dt \geq (L - \delta) c_{T_0} \energy_\nu[u_0,u_1]
\end{equation}
for all $N \geq N_\delta(u_0,u_1)$.
\end{theorem}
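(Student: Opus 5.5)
We combine the finite-band designs of Theorem~\ref{moving-band} block by block with a Ces\`aro averaging argument, using that the energy decouples over tangential modes (the separable structure). We choose increasing bandwidths $\Lambda_m\uparrow+\infty$ and tolerances $\varepsilon_m\downarrow 0$. On the $m$-th block $[(m-1)T_0,mT_0)$ we place a time-shifted copy of the switching design $\omega_{\Lambda_m,\varepsilon_m}(\cdot-(m-1)T_0)$ given by Theorem~\ref{moving-band}. Each value of $\omega(t)$ is then some $\varphi_{R}(\omega)$, which is volume preserving.

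\emph{Block estimate.} Since the wave group is conservative and commutes with time translation, the solution restricted to the $m$-th block is the solution issued from the data $(u(\cdot,(m-1)T_0),\partial_t u(\cdot,(m-1)T_0))$, whose energy $\energy_\nu$ equals $\energy_\nu[u_0,u_1]$. In the separable case this holds because each tangential mode evolves unitarily. We decompose these data as $P_{\Lambda_m}(u_0,u_1)+Q_{\Lambda_m}(u_0,u_1)$, where $P_\Lambda$ is the tangential spectral projector onto $E_\Lambda$ and $Q_\Lambda=I-P_\Lambda$. Both projectors commute with the dynamics. Write $f=f_P+f_Q$ for the corresponding boundary traces. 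On the block we use
$$\int\!\!\int_{\omega(t)}|f|^2\ \ge\ (1-\eta)\int\!\!\int_{\omega(t)}|f_P|^2-\eta^{-1}\int\!\!\int_{M}|f_Q|^2,$$
valid for any $\eta\in(0,1)$. Theorem~\ref{moving-band} bounds the first term below by $(1-\eta)(L-\varepsilon_m)c_{T_0}\energy_\nu[P_{\Lambda_m}(u_0,u_1)]$. The full-boundary upper bound of Theorem~\ref{thm:obs-inequality} bounds the second term above by $\eta^{-1}C_{T_0}\energy_\nu[Q_{\Lambda_m}(u_0,u_1)]$. Here we use that $\energy_\nu$ is orthogonal across tangential modes, so $\energy_\nu=\energy_\nu[P_\Lambda\,\cdot]+\energy_\nu[Q_\Lambda\,\cdot]$.

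\emph{Ces\`aro step.} For fixed data of finite energy, the tail $\energy_\nu[Q_{\Lambda_m}(u_0,u_1)]\to0$ as $m\to\infty$ by dominated convergence over modes. Summing the block estimates and dividing by $N$ gives
$$\frac1N\int_0^{NT_0}\ \ge\ (1-\eta)c_{T_0}\frac1N\sum_{m\le N}(L-\varepsilon_m)\energy_\nu[P_{\Lambda_m}]-\frac{C_{T_0}}{\eta N}\sum_{m\le N}\energy_\nu[Q_{\Lambda_m}].$$
Ces\`aro means of sequences converging to $L\energy_\nu$ and to $0$ converge to the same limits. Taking $\liminf_N$ and then letting $\eta\to0$ yields \eqref{asymptotic-moving-ineq}. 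The quantitative form \eqref{asymptotic-moving-quant} is just the definition of $\liminf$, with $N_\delta$ depending on the data through the tail decay; no uniformity is claimed. The cases $\energy_\nu=0$ and a strict inequality are trivial.

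\emph{Main obstacle.} The delicate point is the non-separable case. There $P_\Lambda$ does not commute with the dynamics, so the trace of band-limited data is not band-limited, and the hypothesis of Theorem~\ref{moving-band} fails. I would first try to restrict the statement, as Theorem~\ref{moving-band} implicitly does, to the separable gas-giant setting where $E_\Lambda$ is invariant. Failing that, I would treat the coupling between $P_\Lambda$ and $Q_\Lambda$ through $K$ as a perturbation, using the $O(x)$ smallness of $K_{\mathrm{near}}$ and the compactness of $K_{\mathrm{far}}$ to show that the leakage $\|Q_{\Lambda_m}e^{t\mathcal A_g}P_{\Lambda}\|$ becomes small in Ces\`aro mean for large $m$. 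A secondary check is that the block-$m$ time shift is compatible with the periodic design, which holds because Theorem~\ref{moving-band} is stated for arbitrary band-limited data, in particular the shifted ones.
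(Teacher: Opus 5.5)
Your proposal is correct and follows the paper's route. On the $m$-th block you run the switching design of Theorem~\ref{moving-band} with $\Lambda_m\to\infty$, $\varepsilon_m\to 0$ and then Ces\`aro-average; your $P_{\Lambda}/Q_{\Lambda}$ splitting with Young's inequality and the full-boundary upper bound is exactly the tail reduction that the paper's mode-by-mode sketch leaves implicit and defers to \cite{dHKT2026}.

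Two caveats, both shared with the paper. First, you work implicitly in the separable setting. Second, you treat $\energy_\nu$ as conserved from block to block, whereas with the fixed $H^s_x$ scale it is only conserved up to equivalence constants; this is harmless if you run the argument with the exactly conserved, mode-diagonal quantity $\sum_{k,n}|a_{n,k}|^2$ and adjust $c_{T_0}$ accordingly.
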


\begin{proof}
Choose sequences $(\Lambda_m)_{m \geq 1}$ and $(\varepsilon_m)_{m \geq 1}$ such that $\Lambda_m \to +\infty$ and $\varepsilon_m \to 0$.
On the $m$-th block $[(m - 1)T_0,mT_0)$, use the switching realization supplied by Theorem~\ref{moving-band} with parameters $(\Lambda_m,\varepsilon_m)$.
For each fixed tangential mode, the corresponding band eventually contains that mode on every later block, with lower bound approaching $L c_{T_0}$.
A Ces\`aro averaging over the blocks then yields~\eqref{asymptotic-moving-ineq}.
The complete abstract argument is given in~\cite{dHKT2026}.
\end{proof}

\paragraph{Practical scanning protocol for gas-giant sensing.}
The previous theorem gives a concrete design recipe:
\begin{enumerate}
\item Fix a base time $T_0 > \beta + 2$, a target tangential bandwidth $\Lambda$, and a tolerance $\varepsilon > 0$.
\item Choose an orthonormal real basis $e_1,\dots,e_d$ of $E_\Lambda$.
For each candidate rotation $R \in \mathrm{SO}(n+1)$, form the symmetric matrix
\begin{equation}
\label{gram-matrix}
M(R)_{ab} = \int_{\varphi_R(\omega)} e_a(y) e_b(y) dv_G(y), \qquad 1 \leq a,b \leq d.
\end{equation}
Then solve the finite-dimensional convexification problem
\begin{equation}
\label{design-exact}
\sum_{j=1}^J \theta_j M(R_j) = L\, \mathrm{Id}_d.
\end{equation}
Equivalently,
\begin{equation}
\label{design-condition}
\sum_{j=1}^J \theta_j \int_{\omega_j} \vert f(y) \vert^2 dv_G(y) = L \int_M \vert f(y) \vert^2 dv_G(y)
\end{equation}
for all $f \in E_\Lambda$, where $\omega_j = \Phi \circ R_j \circ \Phi^{-1}(\omega)$.
In numerical implementations, one may replace \eqref{design-exact} by a semidefinite relaxation guaranteeing a lower bound of size $L - \varepsilon$.
\item Realize the time fractions $\theta_1,\dots,\theta_J$ on the interval $[0,T_0)$ by a piecewise constant switching protocol on a sufficiently fine micro-partition of the period.
This is the rigorous switching realization of the convexified design in the sense of~\cite{dHKT2026}.
For sensing purposes, the simplest implementation is often a one-cycle schedule: visit $\omega_1,\dots,\omega_J$ successively and keep the sensor over $\omega_j$ during one contiguous subinterval of length approximately $\theta_j T_0$.
This one-cycle picture is only a simplified practical description; the abstract theorem is formulated with the refined switching protocol.
\item If no a priori tangential cutoff is available, concatenate such periods with a sequence $(\Lambda_m,\varepsilon_m)$ satisfying $\Lambda_m \to +\infty$ and $\varepsilon_m \to 0$.
For instance, one may take $\Lambda_m = m^2$ and $\varepsilon_m = m^{-1}$.
The resulting long-time protocol is precisely the one encoded in Theorem~\ref{asymptotic-moving}.
\end{enumerate}

\appendix

\section{Equivalence of the tangential energy under quasi-isometry}\label{app:equiv-frozen-metric}

\begin{lemma}\label{lem:equiv-frozen-metric}
	Let $M$ be a smooth compact manifold endowed  with two smooth quasi-isometric metrics, i.e., 
	\begin{equation}\label{eq:qi}
		\exists\,\underline c,\overline C>0\ \ \text{s.t.}\ \ 
		\underline c\,|v|_{h_0}^2 \leq |v|_{h_1}^2\ \leq\ \overline C\,|v|_{h_0}^2\quad\forall\,y\in M,\ \forall\,v\in T_yM,
	\end{equation}
	and their volume densities satisfy
	\begin{equation}\label{eq:vol-comp}
		\underline c_v\, dv_{h_0} \leq dv_{h_1} \leq \overline C_v\, dv_{h_0}\quad\text{on }M.
	\end{equation}
	Denote by $\triangle_{h_j}$ the (positive) Laplace-Beltrami on $(M,h_j)$ with quadratic form
	$$
	\mathfrak q_{h_j}[\varphi]=\int_M |\nabla\varphi|_{h_j}^2\,dv_{h_j},\qquad \varphi\in H^1(M).
	$$
	Then there exist constants $0<c\leq C$ (depending only on the constants in \eqref{eq:qi}--\eqref{eq:vol-comp}) such that, for all $\varphi\in H^1(M)$,
	\begin{equation}\label{eq:form-equivalence}
		c\,\mathfrak q_{h_0}[\varphi] \leq \mathfrak q_{h_1}[\varphi]\ \leq\ C\,\mathfrak q_{h_0}[\varphi],
		\qquad
		c\,\|\varphi\|_{L^2(M,dv_{h_0})}^2 \leq \|\varphi\|_{L^2(M,dv_{h_1})}^2 \leq C\,\|\varphi\|_{L^2(M,dv_{h_0})}^2.
	\end{equation}
	Equivalently, for all $\varphi\in H^1(M)$,
	\begin{equation}\label{eq:root-equivalence}
		c\,\|\triangle_{h_0}^{1/2}\varphi\|_{L^2(dv_{h_0})} \leq \|\triangle_{h_1}^{1/2}\varphi\|_{L^2(dv_{h_1})} \leq C\,\|\triangle_{h_0}^{1/2}\varphi\|_{L^2(dv_{h_0})}.
	\end{equation}
	
	Moreover, letting $s\in\R$ and viewing $(-\triangle_{h_j})^{\frac{1}{2}}$ as a bounded isomorphism $H^{s}_x L^2(M,dv_{h_j})\to H^{s}_x L^2(M,dv_{h_j})$ acting only in the $y$-variable, we have the \emph{vector-valued} extension
	\begin{equation}\label{eq:vector-valued}
		c\,\big\|\triangle_{h_0}^{1/2}U\big\|_{H^{s}_x L^2(M,dv_{h_0})} \leq \big\|\triangle_{h_1}^{1/2}U\big\|_{H^{s}_x L^2(M,dv_{h_1})} \leq C\,\big\|\triangle_{h_0}^{1/2}U\big\|_{H^{s}_x L^2(M,dv_{h_0})}
	\end{equation}
	for all $U\in H^{s}_x L^2(M)$ (where the $H^s_x$ operator acts in $x$ only, so it commutes with $\triangle_{h_j}^{1/2}$, which acts in $y$ only).
\end{lemma}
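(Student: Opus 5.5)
Pointwise comparison of the metrics and of the volume densities gives the form and $L^2$ equivalences \eqref{eq:form-equivalence} directly. At each $y\in M$, the cotangent norms induced by $h_0$ and $h_1$ are comparable. This follows from \eqref{eq:qi} by duality: if $\underline c\,|v|_{h_0}^2\leq |v|_{h_1}^2\leq \overline C\,|v|_{h_0}^2$ on vectors, then $\overline C^{-1}|\xi|_{h_0}^2\leq |\xi|_{h_1}^2\leq \underline c^{-1}|\xi|_{h_0}^2$ on covectors. Since $|\nabla\varphi|_{h_j}=|d\varphi|_{h_j}$, we get $|\nabla\varphi|_{h_1}^2\asymp|\nabla\varphi|_{h_0}^2$ pointwise. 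Multiplying by the density comparison \eqref{eq:vol-comp} and integrating over $M$ gives $\mathfrak q_{h_1}[\varphi]\asymp\mathfrak q_{h_0}[\varphi]$, with constants $\overline C^{-1}\underline c_v$ and $\underline c^{-1}\overline C_v$. The $L^2$ comparison is \eqref{eq:vol-comp} integrated against $|\varphi|^2$. For \eqref{eq:root-equivalence}, I would use that $\triangle_{h_j}$ is the operator associated with the closed form $\mathfrak q_{h_j}$ on $L^2(dv_{h_j})$, with form domain $H^1(M)$ for both metrics. Then $\|\triangle_{h_j}^{1/2}\varphi\|_{L^2(dv_{h_j})}^2=\mathfrak q_{h_j}[\varphi]$, and taking square roots of the first equivalence finishes this part.

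For the vector-valued statement \eqref{eq:vector-valued}, I would use the fact that the $H^s_x$ scale is defined spectrally in $x$ alone. Take the Bessel eigenbasis $(\Phi_k)$ of Proposition~\ref{prop:bessel-functions}, used with the weight $\lambda_k^{s}$, and expand $U(x,y)=\sum_k \Phi_k(x)U_k(y)$ with $U_k(y)=\langle U(\cdot,y),\Phi_k\rangle_x$. Because the $x$-spectral decomposition commutes with $\triangle_{h_j}^{1/2}$, which acts only in $y$, we have
\begin{equation}
\big\|\triangle_{h_j}^{1/2}U\big\|_{H^s_xL^2(M,dv_{h_j})}^2=\sum_{k}\lambda_k^{s}\,\big\|\triangle_{h_j}^{1/2}U_k\big\|_{L^2(dv_{h_j})}^2=\sum_k\lambda_k^{s}\,\mathfrak q_{h_j}[U_k].
\end{equation}
Applying the scalar equivalence to each $U_k$ with the same constants $c,C$ and summing over $k$ gives \eqref{eq:vector-valued}, since the weights $\lambda_k^s>0$ do not depend on $j$. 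One point needs care: the paper writes the anisotropic norm \eqref{eq:anisotropic} through the tangential eigenbasis $\psi_j$, which depends on the metric. I would first rewrite it in the metric-free form $\sum_k\lambda_k^s\|U_k\|^2_{L^2(M)}$, which is the same norm by Parseval on $M$. After that the tangential basis plays no role.

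The main obstacle is justifying that these objects are well defined, not the inequality itself. The coefficients $U_k$ must lie in $H^1(M)$, which I would handle by density, working with $U$ of finitely many $x$-modes that are smooth in $y$. I also need the form domains of both Laplacians to equal $H^1(M)$, which holds because $M$ is compact and both metrics are smooth. Finally, in \eqref{eq:vector-valued}, the operator $\triangle_{h_j}^{1/2}$ is not literally an isomorphism on all of $H^s_xL^2$ because of its kernel (constants). I would read the inequality as a statement about the seminorm $\mathfrak q_{h_j}$, which vanishes on constants for both metrics, so the equivalence is unaffected.
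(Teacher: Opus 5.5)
Your proposal is correct and follows the same route as the paper. You use the pointwise comparison of gradient norms together with the density comparison for \eqref{eq:form-equivalence}, the identity $\|\triangle_{h_j}^{1/2}\varphi\|_{L^2(dv_{h_j})}^2=\mathfrak q_{h_j}[\varphi]$ for \eqref{eq:root-equivalence}, and commutation of the $x$-spectral weight with the tangential operator for \eqref{eq:vector-valued}; your mode-by-mode sum over the Bessel basis is the Parseval form of the paper's fiberwise-in-$x$ argument, and your duality step on covectors is more careful than the paper, which writes the gradient comparison with $\underline c,\overline C$ instead of $\overline C^{-1},\underline c^{-1}$ (this only changes the values of the constants).
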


\begin{proof}
	The $L^2$-equivalence in \eqref{eq:form-equivalence} follows immediately from \eqref{eq:vol-comp}. For the Dirichlet forms, using \eqref{eq:qi} and \eqref{eq:vol-comp} we have pointwise
	$$
	\underline c\,|\nabla\varphi|_{h_0}^2\leq |\nabla\varphi|_{h_1}^2\leq \overline C\,|\nabla\varphi|_{h_0}^2,
	$$
	and after integration
	$$
	\underline c\,\underline c_v\,\mathfrak q_{h_0}[\varphi]\ \leq\ \mathfrak q_{h_1}[\varphi]\ \leq\ \overline C\,\overline C_v\,\mathfrak q_{h_0}[\varphi],
	$$
	which is \eqref{eq:form-equivalence} upon renaming constants.
	
	For \eqref{eq:root-equivalence}, recall that $(-\triangle_{h_j})^{1/2}$ is the unique positive self-adjoint operator associated with the closed form $\mathfrak q_{h_j}$; thus
	$$
	\|\triangle_{h_j}^{1/2}\varphi\|_{L^2(dv_{h_j})}^2=\mathfrak q_{h_j}[\varphi],
	$$
	and \eqref{eq:form-equivalence} gives \eqref{eq:root-equivalence}.
	
	For \eqref{eq:vector-valued}, work on the Hilbert space $\mathcal{H}_j=L^2\big((0,1),dx;L^2(M,dv_{h_j})\big)$ and note that the operator $(I+\mathcal{A}_x)^{s/2}$ defining the $H^{s}_x$ norm acts only on the $x$-variable (here $\mathcal{A}_x$ is the fixed 1D generator used in Section~\ref{sec:1D-case} to define $H^{s}_x$). Since $\triangle_{h_j}^{1/2}$ acts only in the $y$-variable, these operators commute. Using \eqref{eq:root-equivalence} fiberwise in $x$ and the norm comparability between $\mathcal{H}_0$ and $\mathcal{H}_1$ induced by \eqref{eq:vol-comp} yields
	$$
	c\,\|(I+\mathcal{A}_x)^{s/2}\triangle_{h_0}^{1/2}U\|_{\mathcal{H}_0} \leq \|(I+\mathcal{A}_x)^{s/2}\triangle_{h_1}^{1/2}U\|_{\mathcal{H}_1} \leq C\,\|(I+\mathcal{A}_x)^{s/2}\triangle_{h_0}^{1/2}U\|_{\mathcal{H}_0},
	$$
	which is \eqref{eq:vector-valued}.
\end{proof}


\begin{thebibliography}{10}
\small

\bibitem{ABCL2017}
F.~Alabau-Boussouira, P.~Cannarsa, and G.~Leugering.
\newblock Control and stabilization of degenerate wave equations.
\newblock {\em SIAM J. Control Optim.}, 55(3):2052--2087, 2017.

\bibitem{BKL2002}
C.~Baiocchi, V.~Komornik, and P.~Loreti.
\newblock Ingham-beurling type theorems with weakened gap conditions.
\newblock {\em Acta Math. Hung.}, 97(1-2):55--95, 2002.

\bibitem{BLR1992}
C.~Bardos, G.~Lebeau, and J.~Rauch.
\newblock Sharp sufficient conditions for the observation, control, and
  stabilization of waves from the boundary.
\newblock {\em SIAM Journal on Control and Optimization}, 30(5):1024--1065,
  1992.

\bibitem{CL1955}
E.~A. Coddington and N.~Levinson.
\newblock Theory of ordinary differential equations.
\newblock New {York}, {Toronto}, {London}: {McGill}-{Hill} {Book} {Company},
  {Inc}. {XII}, 429 p. (1955)., 1955.

\bibitem{CdVDdHT2024}
Y.~Colin~de Verdi{\`e}re, C.~Dietze, M.~V. de~Hoop, and E.~Tr{\'e}lat.
\newblock Weyl formulae for some singular metrics with application to acoustic
  modes in gas giants.
\newblock Preprint, {arXiv}:2406.19734 [math.{AP}] (2024), to appear in Annales
  Inst. H. Poincar\'e, 2024.

\bibitem{Davies1989}
E.~B. Davies.
\newblock {\em Heat kernels and spectral theory}, volume~92 of {\em Camb.
  Tracts Math.}
\newblock Cambridge etc.: Cambridge University Press, 1989.

\bibitem{dHIKM2024}
M.~V. de~Hoop, J.~Ilmavirta, A.~Kykk{\"a}nen, and R.~Mazzeo.
\newblock Geometric inverse problems on gas giants.
\newblock Preprint, {arXiv}:2403.05475 [math.{DG}] (2024), 2024.

\bibitem{dHKT2026}
M.~V. de~Hoop, A.~Kykk{\"a}nen, and E.~Tr{\'e}lat.
\newblock Moving localized observations and Ces\`aro asymptotic observability for conservative PDEs.
\newblock Preprint, 2026.

\bibitem{Dietze2025}
C.~Dietze.
\newblock The critical case for the concentration of eigenfunctions on singular
  {Riemannian} manifolds.
\newblock Preprint, {arXiv}:2510.23520 [math.{SP}] (2025), 2025.

\bibitem{DR2024}
C.~Dietze and L.~Read.
\newblock Concentration of eigenfunctions on singular {Riemannian} manifolds.
\newblock Preprint, {arXiv}:2410.20563 [math.{AP}] (2024), 2024.

\bibitem{DO2018}
M.~Duprez and G.~Olive.
\newblock Compact perturbations of controlled systems.
\newblock {\em Math. Control Relat. Fields}, 8(2):397--410, 2018.

\bibitem{HB2018}
I.~El~Harraki and A.~Boutoulout.
\newblock Controllability of the wave equation via multiplicative controls.
\newblock {\em IMA J. Math. Control Inform.}, 35(2):393--409, 2018.

\bibitem{Gueye2014}
M.~Gueye.
\newblock Exact boundary controllability of 1-{D} parabolic and hyperbolic
  degenerate equations.
\newblock {\em SIAM J. Control Optim.}, 52(4):2037--2054, 2014.

\bibitem{Haraux1989}
A.~Haraux.
\newblock S{\'e}ries lacunaires et contr{\^o}le semi-interne des vibrations
  d'une plaque rectangulaire. ({Lacunary} series and semi-internal control of
  vibrations of a rectangular plate).
\newblock {\em J. Math. Pures Appl. (9)}, 68(4):457--465, 1989.

\bibitem{Ingham1936}
A.~E. Ingham.
\newblock Some trigonometrical inequalities with applications to the theory of
  series.
\newblock {\em Math. Z.}, 41:367--379, 1936.

\bibitem{Kato1995}
T.~Kato.
\newblock {\em Perturbation theory for linear operators.}
\newblock Class. Math. Berlin: Springer-Verlag, reprint of the corr. print. of
  the 2nd ed. 1980 edition, 1995.

\bibitem{KL2005}
V.~Komornik and P.~Loreti.
\newblock {\em Fourier series in control theory}.
\newblock Springer Monogr. Math. New York, NY: Springer, 2005.

\bibitem{LasieckaTriggiani2000}
I.~Lasiecka and R.~Triggiani.
\newblock {\em Control Theory for Partial Differential Equations: Continuous
  and Approximation Theories}, volume~I.
\newblock Cambridge University Press, Cambridge, 2000.

\bibitem{LR1995}
G.~Lebeau and L.~Robbiano.
\newblock Exact control of the heat equation.
\newblock {\em Commun. Partial Differ. Equations}, 20(1-2):335--356, 1995.

\bibitem{Lions1988}
J.-L. Lions.
\newblock {\em Exact Controllability, Stabilization and Perturbations for
  Distributed Systems}.
\newblock SIAM, Philadelphia, 1988.

\bibitem{Miller2005}
L.~Miller.
\newblock {\em Control Theory and Partial Differential Equations}, volume~1 of
  {\em C.I.M.E. Summer Schools}.
\newblock Springer, Berlin, 2005.

\bibitem{Olver1974}
F.~W.~J. Olver.
\newblock Asymptotics and special functions.
\newblock Computer {Science} and {Applied} {Mathematics}. {New} {York} -
  {London}: {Academic} {Press}, a subsidiary of {Harcourt} {Brace}
  {Jovanovich}, {Publishers}. {XVI}, 572 p. \$ 39.50 (1974)., 1974.

\bibitem{Ouzahra2019}
M.~Ouzahra.
\newblock Controllability of the semilinear wave equation governed by a
  multipicative control.
\newblock {\em Evol. Equ. Control Theory}, 8(4):669--686, 2019.

\bibitem{RS1975}
M.~Reed and B.~Simon.
\newblock Methods of modern mathematical physics. {II}: {Fourier} analysis,
  self- adjointness.
\newblock New {York} - {San} {Francisco} - {London}: {Academic} {Press}, a
  subsidiary of {Harcourt} {Brace} {Jovanovich}, {Publishers}. {XV}, 361 p. \$
  24.50; {{\textsterling}} 11.75 (1975)., 1975.

\bibitem{Teschl2012}
G.~Teschl.
\newblock {\em Ordinary differential equations and dynamical systems}, volume
  140 of {\em Grad. Stud. Math.}
\newblock Providence, RI: American Mathematical Society (AMS), 2012.

\bibitem{Trelat2024}
E.~Tr{\'e}lat.
\newblock {\em Control in finite and infinite dimension}.
\newblock SpringerBriefs PDEs Data Sci. Singapore: Springer, 2024.

\bibitem{TW2009}
M.~Tucsnak and G.~Weiss.
\newblock {\em Observation and control for operator semigroups}.
\newblock Birkh{\"a}user Adv. Texts, Basler Lehrb{\"u}ch. Basel:
  Birkh{\"a}user, 2009.

\bibitem{Zettl2005}
A.~Zettl.
\newblock {\em Sturm-Liouville theory}, volume 121 of {\em Math. Surv. Monogr.}
\newblock Providence, RI: American Mathematical Society (AMS), 2005.

\bibitem{ZG2017}
M.~Zhang and H.~Gao.
\newblock Null controllability of some degenerate wave equations.
\newblock {\em J. Syst. Sci. Complex.}, 30(5):1027--1041, 2017.

\end{thebibliography}
\end{document}